\documentclass[11pt]{amsart}
\usepackage{times}
\usepackage[T1]{fontenc}
\usepackage{amssymb, amsthm, amsmath}
\usepackage{mathrsfs}
\usepackage[all]{xy}

\usepackage[active]{srcltx}

\usepackage{todonotes}

\usepackage{setspace}

\usepackage{hyperref}

\DeclareMathOperator{\acl}{acl}

\newtheorem{introtheorem}{Theorem}
\newtheorem*{thm*}{Theorem}

\newtheorem*{gen-dif}{\fbox{{\large A}} \hypertarget{Agen-dif}{Gen-Dif}}

\newtheorem*{min-balln}{\fbox{{\large A}} \hypertarget{Amin-ball}{Cballs}}

\theoremstyle{definition}

\newcommand{\Nn}{{\mathbb{N}}}

\newcommand{\m}{\textbf{m}}

\newcommand{\CK}{{\mathcal K}}

\newcommand{\CM}{{\mathcal M}}

\newcommand{\CC}{{\mathbb C}}

\newcommand{\CG}{{\mathcal G}}
\newcommand{\CS}{\mathcal S}

\newcommand{\0}{\emptyset}

\renewcommand{\phi}{\varphi}

\def\bm{\mathfrak m}

\def\sub{\subseteq}

\newcommand{\from}{\leftarrow}
\newcommand{\xto}{\xrightarrow}

\renewcommand{\m}[1]{\mathrm{#1}}

\newcommand{\new}{\newcommand}

\new{\spar}
[1]{\medskip\refstepcounter{subsection}\noindent\arabic{section}.\arabic{subsection}.\label{#1}}

\new{\sspar}
[1]{\medskip\refstepcounter{subsection}\noindent\arabic{section}.\arabic{subsection}.\arabic{subsubsection}.\label{#1}}

\theoremstyle{definition}
\newtheorem{sRemark}[subsection]{Remark}
\newtheorem{sCorollary}[subsection]{Corollary}
\newtheorem{sLemma}[subsection]{Lemma}
\newtheorem{sProposition}[subsection]{Proposition}
\newtheorem{sTheorem}[subsection]{Theorem}
\newtheorem{sClaim}[subsection]{Claim}
\newtheorem{sDefinition}[subsection]{Definition}
\newtheorem{sFact}[subsection]{Fact}

\newenvironment{claimproof}[1][\proofname]
{%
\proof[#1]%
}
{%
\endproof%
}

\newcommand{\inv}{^{-1}}
\new{\kbar}{\overline k}
\new{\ord}{\opnm{ord}}
\new{\mM}{\mathfrak{m}}
\new{\nN}{\mathfrak n}
\new{\Supp}{\opnm{Supp}}

\newcommand{\Spec}{\operatorname{Spec}}

\newcommand{\Div}{\operatorname{Div}}

\newcommand{\Aut}{\mathrm{Aut}\,}

\newcommand{\si}{\sigma}

\newcommand{\ga}{\gamma}
\newcommand{\al}{\alpha}
\newcommand{\be}{\beta}

\new{\mMp}{{\mM'}}

\newcommand{\FF}{\mathbb{F}}

\new{\opnm}{\operatorname}
\new{\Fr}{\opnm{Fr}}

\new{\Fq}{F_q}
\new{\Fqbar}{{\overline F_q}}

\new{\Gal}{\opnm{Gal}}

\new{\ZZ}{\mathbb{Z}}
\new{\Fbar}{{\overline F}}
\new{\PP}{\mathbb{P}}
\new{\Gm}{\mathbb{G}_m}

\title{A curve and its abstract \emph{generalized} Jacobian}
\author{Benjamin Castle}
\address{BC: Department of mathematics, University of Illinos at Urbana Champaign}
\email{btcastl2@illinois.edu}
\thanks{BC was supported by NSF grant DMS-2452735.}

\author{Ishai Dan-Cohen}
\address{ID: Department of Mathematics, Ben-Gurion University of the Negev.}
\email{ishaida@bgu.ac.il}
\thanks{ID was supported by ISF grant number 621/21. Political disclaimer: ID asks not to be considered responsible for the actions of any government that does not fully embrace the principles of democracy.}

\author{Assaf Hasson}
\address{AH: Department of Mathematics, Ben-Gurion University of the Negev.}
\email{hassonas@math.bgu.ac.il}
\thanks{AH was supported by ISF grant number 555/21}

\date{\today}
\begin{document}

\thanks{}

\begin{abstract}
To a smooth proper curve $C$ over a field $k$ equipped with a $k$-point $c$ and an effective divisor $\mM$ coprime to $c$, one may associate the abstract group $J_\mM(\bar k)$ of $\overline k$-points of the generalized Jacobian, as well as a subset
\[
\tag{*}
\big(C\setminus \Supp(\mM)\big)(\bar k) \subset J_\mM(\bar k).
\]
We show that the data $(C,c,\mM)$ can be retrieved from (*) up to a twist by an automorphism of $\overline k$, proving a conjecture of Booher and Voloch from \cite{BooVol}. By a result of Booher and Voloch this shows that when $k$ is a finite field, the same data may also be retrieved from $L$-functions of characters of certain Galois extensions of the function field of $C$.
The proof is a generalization of Zilber's well known work "A curve and its abstract Jacobian".
\end{abstract}

\maketitle

\section{Introduction}

In \cite{ZilJac} Zilber proves that, given an algebraically closed field $K$,  a smooth irreducible projective algebraic curve $C$ over $K$ of genus at least 2 can be recovered from the abstract abelian group $(J_C(K), +)$ of $K$-rational points of the Jacobian $J_C$ of $C$, augmented by a unary predicate for the image of $C$ in $J_C$ under an Abel-Jacobi map. In \cite[Theorem 2.4]{BoVo1} Booher and Voloch give a geometrically more precise statement of Zilber's theorem, for the case where $K=\overline {\mathbb F}_q$, from which they deduce (Theoren 3.1, \emph{loc. cit.}) a new proof of a theorem of Mochizuki and Tamagawa on the reconstruction of a smooth projective curve $C$ over a finite field from $\pi_1(C)$. In \cite[Conjecture 2.6]{BooVol} Booher and Voloch conjecture that Zilber's theorem can be extended to reconstruct a smooth projective irreducible curve and a modulus $\bm$ on $C$ from the pure abelian group of $K$-rational points of $J_\bm(C)$ -- the \emph{generalized} Jacobian\footnote{Readers unfamiliar with generalized Jacobians may consult Serre's book \cite{SerreGJ} for all the necessary background. See also the proof of Theorem \ref{AF12} for the definition.} of $(C,\bm)$ -- expanded by a predicate for the image of $C$ in $J_\bm(C)$ under an Abel-Jacobi map. 



The current work aims to prove this conjecture. Our main theorem in outline is as follows.

\begin{introtheorem}\label{T: main}
Let $C$, $C'$ be smooth proper geometrically connected pointed curves over a field $k$ and $\mM$, $\mM'$ effective divisors coprime to the base points. Let $U$, $U'$ be the complements of the supports of $\mM$, $\mM'$ in $C$, $C'$. Assume the associated generalized Jacobians $J_\mM$, $J_{\mM'}$ have dimension at least $2$. Suppose
\[
\psi: J_\mM(\overline k) \to J_{\mM'}(\overline k)
\]
is an isomorphism of abstract groups of points with values in an algebraic closure $\overline k$ of $k$.
\begin{enumerate}
\item
Assume 
\[
\psi \big( U(\overline k) \big)
=
U'(\overline k). \footnote{To keep notation simpler, we will not distinguish between $U$ and its image in $J_\mM$ under the Abel-Jacobi map.}
\]
Then $\psi$ is induced by an isomorphism over $\overline k$ of $(C', \mM')$ with $(C, \mM)$ up to a twist by an automorphism of $\overline k$.
\item
Let $k = \FF_q$ and assume
\[
\psi \big( U(\FF_{q^m}) \big)
=
U'(\FF_{q^m})
\]
for all $m$. Then $\psi$ is induced by an isomorphism over $\FF_q$ of $(C', \mM')$ with $(C, \mM)$ up to a twist by Frobenius. 
\end{enumerate}

\end{introtheorem}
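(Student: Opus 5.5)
We follow Zilber's strategy from \cite{ZilJac}, adapted to the generalized Jacobian. Set $G=J_\mM(\overline k)$ and regard the structure $\CM=(G,+,U(\overline k))$. The goal is to interpret in $\CM$, uniformly and without parameters beyond those definable in $\CM$, the field $\overline k$ together with the algebraic data $(C,c,\mM)$ up to field automorphism. Then $\psi$, which is an isomorphism $\CM\to\CM'$, carries the interpreted field and curve of $\CM$ to those of $\CM'$. This yields a field isomorphism $\sigma:\overline k\to\overline k$ and a $\sigma$-semilinear isomorphism of the interpreted curves compatible with $\psi$.

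\textbf{Step 1: definable sets.} Since $J_\mM$ is generated by $U$ (the Abel--Jacobi map $U^{(g')}\to J_\mM$, with $g'=\dim J_\mM$, is birational onto its image and surjective), every constructible subset of $J_\mM^n$ that is built from sums and differences of copies of $U$ is definable in $\CM$. In particular, the sets $W_r=U+\dots+U$ ($r$ summands) and their translates and differences are definable. We work inside the strongly minimal (or at least finite Morley rank) structure induced on $U$, using $\acl$ in the sense of $\CM$. The first task is to show that $\CM$ is a reduct of the algebraically closed field structure on $\overline k$, which is clear, and that it is non-locally-modular. For the non-local-modularity we use $\dim J_\mM\ge 2$ to exhibit a two-dimensional definable family of curves on $U\times U$, namely the sets $\{(x,y): x-y\in a+W_r-W_s\}$ for suitable $r,s$.

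\textbf{Step 2: recovering the field.} Apply Zilber's argument directly: the family of translates of $U$ inside $J_\mM$ yields a group configuration. Alternatively, by the trichotomy for reducts of ACF (Rabinovich/Zilber), a non-locally-modular strongly minimal reduct defines a field isomorphic to $\overline k$, and that field is definably isomorphic to the ambient field. Concretely, this step identifies a definable field $F$ and a definable bijection between $U$ and a constructible curve over $F$ such that the group law of $G$ becomes algebraic over $F$.

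\textbf{Step 3: recovering the data.} Once $F\cong\overline k$ via some $\sigma$ and $U$ is identified with a curve over $F$, we recover the smooth compactification $C$ and the base point $c$ (as $0\in G$ pulled back). The group $G$ becomes a connected commutative algebraic group over $F$, equipped with a morphism $U\to G$ that generates $G$. By the universal property of generalized Jacobians (Rosenlicht--Serre), this morphism factors through $J_{\mM''}$ for a smallest modulus $\mM''$. We must then show that $\mM''=\mM$. For this, compare the linear part $L_\mM=\ker(J_\mM\to J)$: it is intrinsic to the algebraic group, and its structure (the split torus part of dimension $|\Supp\mM|-1$ and the unipotent part $\sum(n_P-1)$) together with the local symbols determines the multiplicities $n_P$ at each missing point. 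Transporting through $\psi$, we get $(C',\mM')\cong(C,\mM)^\sigma$ compatibly with $\psi$, which gives (1).

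For (2), $\psi$ preserves $U(\FF_{q^m})$ for all $m$, and these sets are the fixed points of Frobenius. The induced twist $\sigma$ must therefore commute with all Frobenius powers on the relevant points. This forces $\sigma$ to be a power of Frobenius, so the isomorphism descends to $\FF_q$ up to a Frobenius twist.

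\textbf{Main obstacle.} The hard step is Step 2 in the non-compact setting. Zilber's original argument uses that $J$ is an abelian variety, so that the curves $\{x-y\in a+W\}$ have good intersection theory and the definable field sits inside a complete variety. For a generalized Jacobian, the linear part (tori and especially unipotent $\mathbb{G}_a$-factors in characteristic $p$) could a priori make $U$ locally modular or could yield additive configurations. I would first try to establish non-local-modularity via a rank computation: use a family of ``secant'' subvarieties whose dimension grows with $\dim J_\mM\ge2$. If that works, the trichotomy supplies the field. A second, more delicate point is proving that the recovered modulus equals $\mM$ rather than something coarser; this requires showing that the unipotent structure is definable in the pure group with the predicate.
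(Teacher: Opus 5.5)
\textbf{Gap 1: internality (Step 2).} The step that carries the weight is asserted rather than proved. You say the trichotomy ``concretely'' gives a definable \emph{bijection} between $U$ and a curve over $F$. It does not. It gives only an interpretable field $F$. Almost strong minimality then gives only \emph{almost} internality, i.e.\ a definable finite-to-one map $J_\mM\to F^k$, and in general a strongly minimal set non-orthogonal to $F$ need not be internal to it.

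Upgrading this map to an injection is where Zilber uses that a curve of genus $\ge 2$ has trivial stabilizer in its Jacobian. The idea missing from your plan is the analogue: $\operatorname{Stab}(U)=\{a:\ (U+a)\cap U \text{ infinite}\}$ is trivial in $J_\mM$. This follows from birationality of $U^{(\pi)}\to J_\mM$ with $\pi=\dim J_\mM$. A generic point is \emph{uniquely} a sum of $\pi$ independent generic points of $U$, which rules out rewriting $x_1+x_2+\cdots=(x_1-a)+(x_2+a)+\cdots$.

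With this in hand, take $g$ generic and $a,b\in U+g$ generic over $g$ with the same image under the finite-to-one map. Then $-g\in (U-a)\cap(U-b)$ and $g\notin\acl(a,b)$. So that intersection is infinite and $a-b\in\operatorname{Stab}(U)=0$. Hence $U$, and therefore $J_\mM$, is internal to $F$, and the relic is full. Only fullness lets you transport along $\psi$ to get $\psi=f\circ\sigma$ with $f$ a bijective isogeny; without it you recover $J_\mM$ only modulo a finite subgroup.

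The obstacle you single out is not one. If $U$ agreed up to finitely many points with a coset, every iterated sum $U+\cdots+U$ would be one-dimensional. That contradicts $U$ generating $J_\mM$ with $\dim J_\mM\ge2$, so the structure is not $1$-based whatever the tori or $\mathbb{G}_a$-factors are.

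\textbf{Gaps in the geometric part.}

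(i) Fullness yields only a bijective isogeny. So your ``$\sigma$-semilinear isomorphism of curves'' is a priori only a bijective morphism $\sigma^*C\to C'$. In characteristic $p$ you must factor it as a relative Frobenius followed by a separable map, which is a special feature of curves. Then use that a separable morphism of smooth proper curves that is bijective on dense opens is an isomorphism, and absorb the Frobenius into $\sigma$.

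(ii) Your invariants for the modulus cannot work. $3P+Q$ and $P+3Q$ have linear parts with the same torus and unipotent dimensions. No invariant of $L_\mM$ alone tells you which missing point carries which multiplicity; you must use compatibility with the Abel--Jacobi maps. Suppose $\alpha(U)=U'$ and there is a compatible homomorphism $J_{\alpha^*\mM'}\to J_\mM$. Choose $f$ with $\ord_P f=\ord_P\alpha^*\mM'$ on $\Supp\mM$. Then $\operatorname{div}(f+1)$ vanishes in $J_{\alpha^*\mM'}$, hence in $J_\mM$. So $f+1=g+1$ with $g\equiv0 \bmod \mM$, which gives $\alpha^*\mM'\ge\mM$; symmetry or dimension gives equality. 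No model-theoretic detection of unipotent structure is needed.

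(iii) In part (2), your claim that $\sigma$ must be a Frobenius power is false, and it is beside the point. Every element of $\Gal(\overline\FF_q/\FF_q)\cong\hat{\ZZ}$ preserves each $\FF_{q^m}$. So for $C=C'$ and $\mM=\mM'$, any such $\sigma$ that is not a Frobenius power gives an admissible $\psi$. The theorem allows arbitrary $\sigma$. What must be proved is that the curve isomorphism $\alpha$, a priori defined over $\overline\FF_q$, descends to $\FF_q$.

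Since $\sigma$ and Frobenius preserve $\FF_{q^m}$-points, $\alpha$ maps the $\FF_{q^m}$-points of the twisted $U$ onto $U'(\FF_{q^m})$. Then $\rho_m=\Fr_{q^m}^{-1}\alpha^{-1}\Fr_{q^m}\alpha$ is an automorphism of $(C,\mM)$ over $\overline\FF_q$ fixing $U(\FF_{q^m})$. Non-identity automorphisms of $(C,\mM)$ have a uniformly bounded number of fixed points, while $\sharp U(\FF_{q^m})\to\infty$. So $\rho_m=\mathrm{id}$ for $m\gg0$. Thus $\alpha$ is defined over $\FF_{q^m}$ for all large $m$, hence over $\FF_q$. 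Without this argument nothing excludes curves that become isomorphic to $C'$ only over an extension.
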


See Theorem \ref{BC33} and Proposition \ref{P: ACF version of main thm} for the precise statements. As Booher and Voloch \cite{BoVo1} explain, their conjecture implies that a pointed curve $(C,[M])$ and an effective divisor $\mM$ coprime to the base point may be retrieved from L-functions of certain Galois extensions of the function field.


Actually, Conjecture 2.6 of \cite{BoVo1} differs from our theorem in that Booher and Voloch assume only that $\psi$ induces a bijection between $\overline \FF_q$-points of the respective open curves $U$, $U'$. This is insufficient, as can already be seen in case $\mM = 0$ by considering any two nonisomorphic curves of genus $\ge 2$ over a finite field that become isomorphic over a finite extension. Despite this, their application involving L-functions is unharmed for the simple reason that at this point they \textit{do} impose a stronger condition involving $\FF_{q^m}$-points for all $m$. We include an account of this application in the essentially expository Section 4, culminating in our Corollary \ref{BA36}, which is stated as Conjecture 2.4 of \cite{BoVo1}.

The proof of our main result consists of a model-theoretic part (Section 2) and a geometric part (Section 3). The model-theoretic part of the proof follows Zilber's argument, and like Zilber's theorem, it works over any algebraically closed field. In fact, for any algebraically closed $K$, $K'$ an isomorphism $\psi: J_{\bm}(K)\to J_{\bm'}(K')$ mapping $U(K)$ onto $U'(K')$ implies that $K\cong K'$ and that $\psi$ is induced by the composition of an isomorphism of fields with a bijective isogeny. The geometric part of the proof then recovers $(C, \bm)$ from the data, as well as the precise form of the isomorphism, when the data is defined over a finite field and $K$ is its algebraic closure.

\bigskip

\subsection*{Acknowledgements}
We wish to thank Felipe Voloch for his encouragement and for several helpful email exchanges. We thank Will Sawin for his responses to questions related to this work on \textit{mathoverflow}. Finally, we are particularly indebted to Jeremy Booher: in an earlier draft, the isomorphism of curves constructed in Theorem \ref{BC33} was defined only after a finite field extension, and this caveat infected also our Corollary \ref{BA36}; the strategy for descending down to $\FF_q$ was suggested to us by Booher.

\section{Model theory}

Zilber's main result of \cite{ZilJac}, described informally in the opening paragraph of this paper, is essentially Theorem \ref{T: main} applied to the case $\mM=(0)$ (at least up to a reformulation due to Booher and Voloch \cite[Theorem 2.4]{BooVol}). The model-theoretic part of our argument follows Zilber's original proof closely, with some necessary generalizations (and some simplifications). The model-theoretic content is summarized by Theorem \ref{T: Zil} below; it roughly states that a commutative algebraic group $G$, generated by a (not necessarily closed) algebraic curve $C$, can be reconstructed (up to a quotient by a finite subgroup) from the abstract group of points with values in a fixed algebraically closed field, augmented by a unary predicate for $C$. The proof strategy is described in detail in \cite{CasHasAV}, so we will be brief. 

\spar{}
The main ingredient in Zilber's proof is a special case, \cite{Ra}, of "Zilber's restricted trichotomy" (referred to in Zilber's paper as "Rabinovich's Theorem"). 
In \cite{CaHaYe}, Zilber's Restricted Trichotomy Conjecture was finally fully proved, allowing some of the above-mentioned generalizations and simplifications (arising, in part, by avoiding the need to meet the additional assumptions of Rabinovich's theorem).  Roughly, Zilber's Restricted Trichotomy asserts that if $K$ is an algebraically closed field, and $S$ is a constructible subset of $K^n$ (in the usual algbro-geometric sense), then an isomorphic copy $F$ of $K$ can be recovered from any small fraction of the geometric structure on $S$, provided a necessary non-triviality assumption is fulfilled\footnote{The failure of this non-triviality assumption, referred to below as 1-basedness, splits into two subcases: a totally disintegrated case, with no possible algebraic structure, and a "module like"  case. Whence, "Zilber's Trichotomy". This is a restriction to algebraically closed fields of Zilber's  Trichotomy Conjecture, that in full generality, was disproved by Hrushovski \cite{Hr1}.}. Although Zilber's restricted trichotomy is only used in the proof as a black box, let us state it for the sake of clarity and completeness of the exposition, explaining the geometric content of the model-theoretic terminology. The following is a corollary to Theorem 1 of \cite{CaHaYe}:
\begin{sFact}[Zilber's Restricted Trichotomy]
    Let $K$ be an algebraically closed field, $S$ a constructible subset of $K^n$. Consider a structure $\mathcal S$ with universe $S(K)$ and atomic relations that are all $K$-constructible subsets of cartesian powers of $S(K)$. If $\CS$ is not 1-based then an algebraically closed field $F$, $K$-definably isomorphic to $K$, is interpretable in $\CS$. 
\end{sFact}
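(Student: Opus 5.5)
The plan is to derive the Fact directly from Theorem 1 of \cite{CaHaYe}, so the work is mainly bookkeeping: I will check that $\CS$ satisfies that theorem's hypotheses, and then upgrade its conclusion to the form stated here. Theorem 1 of \cite{CaHaYe} is about a structure $\mathcal M$ interpretable (equivalently, definable after eliminating imaginaries) in an algebraically closed field $K$ that is not locally modular, i.e.\ not 1-based. For such $\mathcal M$ it concludes that $\mathcal M$ interprets a field $F$ which is definably isomorphic in $K$ to $K$.

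\textbf{Step 1: $\CS$ is a reduct of a structure definable in $K$.} By Chevalley's theorem, constructible subsets of $K^n$ are exactly the sets definable with parameters in the field $(K,+,\cdot)$. So the universe $S(K)$ and every atomic relation of $\CS$, each a constructible subset of some $S(K)^m\subseteq K^{nm}$, are $K$-definable. Hence $\CS$ is a reduct of the full induced structure on $S(K)$, and in particular $\CS$ is definable in $K$.

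\textbf{Step 2: match the notions of non-triviality.} I will confirm that "1-based" in the Fact means the same as the failure of the hypothesis in \cite{CaHaYe}. In the strongly minimal setting, 1-basedness and local modularity coincide. When $\dim S>1$, I will use whichever formulation \cite{CaHaYe} adopts for structures of arbitrary rank (non-local-modularity of the structure, or the existence of a non-1-based strongly minimal set interpretable in it). Under that formulation, "$\CS$ not 1-based" is exactly the hypothesis of their theorem.

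\textbf{Step 3: apply the theorem and transfer the conclusion.} Applying \cite[Theorem 1]{CaHaYe} gives an algebraically closed field $F$ interpretable in $\CS$. Since $\CS$ is a reduct of $K$, this interpretation is also an interpretation of $F$ in $K$. By elimination of imaginaries in ACF, it becomes a $K$-definable field. An infinite field definable in an algebraically closed field $K$ is $K$-definably isomorphic to $K$, by Poizat's theorem on fields definable in ACF. This yields the asserted $K$-definable isomorphism $F\cong K$; it may already be part of the statement of \cite{CaHaYe}.

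\textbf{Main obstacle.} I expect Step 2 to be the only delicate point. The difficulty is ensuring that the paper's usage of "1-based" for a possibly higher-dimensional $S$ agrees with the hypothesis in \cite{CaHaYe}. If their theorem is stated only for strongly minimal reducts, I would first try to reduce to that case: a non-1-based structure of finite Morley rank interprets a non-locally-modular strongly minimal set $D$ (I would cite Hrushovski's results for this), and $D$ is still interpretable in $K$. I would then apply the theorem to $D$, noting that anything interpretable in $D$ is interpretable in $\CS$.
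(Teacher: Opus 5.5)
Your proposal is correct and takes the paper's approach: the paper gives no argument beyond stating the Fact as a corollary of Theorem 1 of \cite{CaHaYe}. Your bookkeeping fills in that citation naturally. Chevalley's theorem makes $\mathcal S$ a $K$-relic, and you align 1-basedness with non-local-modularity, reducing to a non-locally-modular strongly minimal set if needed. Elimination of imaginaries together with Poizat's theorem then gives the $K$-definable isomorphism $F\cong K$.
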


Let us explain, very briefly, the model theoretic terminology used above. Readers unfamiliar with model-theoretic language may wish to further consult the appendix of \cite{CasHasAV} for a more detailed geometric interpretation of these notions. For completeness and greater readability, additional terminology will be explained as we proceed.  We recall that a \textit{structure $\CS$} is a non-empty set $S$ (called the universe of the structure) equipped with a collection of subsets of cartesian powers of $S$ (called the \textit{atomic} or basic relations of $\CS$)\footnote{The collection of symbols used to designate the atomic relations is the \textit{language} of the structure.}. In our setting, the universe of the structure will be the set of $K$-rational points of some algebraic group (the Jacobian of a curve in Zilber's theorem, the generalized Jacobian $J_\mM$ in the present paper). The atomic relations are the group operation (rather, its graph) and a unary predicate for the image of the curve $C$ under the Abel-Jacobi map. 

Given a structure $\CS$, a $\0$-definable set is a subset of $S^n$ (any $n$) whose elements are those satisfying a formula in the language consisting precisely of the atomic relations of $\CS$. 
E.g., in our setting, $\phi(x):=\exists z (z+z=x)$ defines those elements of $J_\mM$ divisible by $2$. For a parameter set  $A\subseteq S$ a set is $A$-definable if it is $\0$-definable in the structure $\CS$ augmented by constants for each element of $A$. E.g., if $\CS$ is a group and $g$ is any element then the set $\phi(x):=\exists z (x=zgz^{-1})$ is the set of conjugates of $g$ in $S$, and it is $g$-definable. By a set definable in $\CS$ (or an $\CS$-definable set) we mean an  $S$-definable set in $\CS$. A definable function is one whose graph is a definable set. 

It is an easy exercise to verify by induction that the $\0$-definable sets in a structure $\CS$ are precisely those obtained by closing the atomic sets, together with all possible diagonals, under finite cartesian products, finite Boolean combinations and coordinate projections. The $A$-definable sets are then those obtained as fibres over points all of whose coordinates are in $A$ of coordinate projections of $\0$-definable sets.

Given a structure $\CS$, an $\CS$-definable group is a definable set $G\sub S^n$ (some $n$), equipped with an $\CS$-definable function $\cdot: G^2\to G$ turning $G$ into a group. If $\cdot$ is a group operation on $S$, we say that $\CS$ expands a group. Definable fields are defined analogously. In order not to overload the notation, we will not distinguish notationally between the field $K$, and the field structure $\CK:=(K,+,\cdot)$ with universe $K$. We remark that, by a theorem of Chevalley, over an algebraically closed field $K$, the $K$-definable subsets of $K^n$ are precisely the constructible subsets of affine $n$-space in the sense of algebraic geometry. 

For expansions of groups, the non-triviality assumption in the statement of Zilber's Restricted Trichotomy can be formulated quite simply. In general, the notion of 1-based structures is an abstraction of the idea of linearity. In our setting, if $\CS$ is an expansion of a group, 1-basedness is equivalent to the condition that for all $n$, all definable subsets of $S^n$ are finite boolean combinations of cosets of definable subgroups of $S^n$ (see the main result of \cite{HrPi87} for details). In particular, this condition implies that every $S$-definable irreducible curve in $S^n$ agrees with a coset of a subgroup up to finitely many points.

Thus, the structure $\mathcal J:=(J_\mM(K), +, U(K))$ is not 1-based because $U(K)$ is not (even up to finitely many points) a coset\footnote{It is here that the assumption that $\dim(J_\mM(K))\geq 2$ is used. E.g., if $C$ is an elliptic curve and $m=0$ then $J_\mM=C$ and the structure $\mathcal J$ is just a pure abelian group, which is 1-based.} of a subgroup of $J_\mM$. So Zilber's Restricted Trichotomy applies. Its conclusion means that a field $F$ is interpretable in the structure $\mathcal J$. Let us note that for this exposition, the distinction between interpretable\footnote{An interpretable set is one obtained as the quotient of a definable set by a definable equivalence relation.} and definable sets is immaterial and is used solely for accuracy. It may be worth mentioning that while the proof of Zilber's Restricted Trichotomy is, in the most part constructive, it does not provide, at that level of generality, a formula defining a field.   \\

\spar{}
We now proceed to describe the conclusion of the argument. To recap, Zilber's Restricted Trichotomy allows us to construct in the structure $\mathcal J:=(J_\mM(K),+, U(K))$ a field $F$ that is $K$-definably isomorphic to $K$. Once a field $F$ is constructed definably in $\mathcal J$, the conclusions of Zilber's theorem of \cite{ZilJac} and our version thereof (Theorem \ref{T: Zil} below) are quite similar. An easy argument (see, e.g., \cite[Lemma 2.3]{CasHasAV}) shows that to conclude it is enough to show that our constructible set $J_\mM(K)$ can be endowed with the structure of an algebraic variety over the field $F$. By \cite[Proposition 4.12]{CaHa}, towards this end, it will suffice to construct a $\mathcal J$-definable injection of $J_\mM(K)$ into $F^n$ (some $n$). 
The construction of such an injection is standard, but slightly delicate. It constitutes the main bulk of the current section and builds on techniques introduced in \cite{CasHas} and in \cite{CasHasAV}.

To state what is achieved in this part of the argument in a more concise way, we recall the meaning of the phrase `$V$ geometrically determines $G$' from \cite{CasHasAV}. In that paper, $G$ was an algebraic group and $V$ was an irreducible closed subset. In fact, the definition and associated technology work if $V$ is \textit{any} constructible subset (and this is useful to us, because the set $U\subset J_m$ is not closed, and a priori we do not know the map $\phi$ respects closure). Thus, we will adopt this added generality:

\begin{sDefinition}
\label{D: determines}
Let $G$ be an algebraic group over an algebraically closed field $K$, and let $X$ be a constructible subset of $G$. We say that $X$ \textit{geometrically determines $G$} if the following holds: let $G'$ be another algebraic group over an algebraically closed field $K'$, with a constructible subset $X'$. Let $\phi:G(K)\rightarrow G'(K')$ be a group isomorphism satisfying $\phi(X(K))=X'(K')$. Then $\phi$ decomposes as  $\phi=f\circ\sigma$, where $\sigma$ is induced by a field isomorphism $K\rightarrow K'$ and $f$ is a bijective isogeny of algebraic groups over $K'$.
\end{sDefinition}

In this terminology, Zilber's theorem states that a smooth algebraic curve (irreducible, projective, of genus at least 2) geometrically determines its Jacobian. As an aside, let us note that if $G$ is a simple algebraic group then, by a model theoretic version of the Borel-Tits theorem due to Poizat, \cite{PoiFields}, the empty set geometrically determines $G$.  

It is convenient to restate Zilber's strategy (as outlined above) using more precise terminology. Given an algebraically closed field $K$, a $K$-\textit{relic} is a model theoretic structure all of whose $\0$-definable sets (and, in particular, its universe) are $K$-constructible. A $K$-relic $\CM$ with universe $M$ is \emph{full} if every $K$-constructible subset of $M^n$ (any $n$) is $\CM$-definable. The main part of Zilber's proof then amounts to showing that the $K$-relic $\mathcal J_C:=(J(K), +,C(K))$ 
is full. By \cite[Lemma 2.3]{CasHasAV}, this implies the main result. 

Let $(C,[M])$ be a pointed, smooth, irreducible, projective algebraic curve over an algebraically closed field $K$ and $\bm$ a modulus on $C$ (disjoint from $M$). We consider the relic $\mathcal J_\bm:=(J_\bm(K), +, U(K))$ where $U$ is the complement of the support of $\bm$ in $C$. 
The model-theoretic part of our argument consists of proving: 

\begin{sTheorem}
\label{T: Zil}
The relic $\mathcal J_m$ is full, and thus $U$ geometrically determines $J_\bm$.   
\end{sTheorem}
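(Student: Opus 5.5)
We follow the strategy laid out above: first obtain a field inside $\mathcal J_\bm$, then embed $J_\bm(K)$ definably into a power of that field, and then invoke \cite[Proposition 4.12]{CaHa} and \cite[Lemma 2.3]{CasHasAV}.

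\textbf{Step 1: a field.} The relic $\mathcal J_\bm$ is not 1-based. Since $J_\bm$ is generated by $U$ and $\dim J_\bm\ge 2$, the curve $U$ is not, even up to finitely many points, a coset of a subgroup. Hence Zilber's Restricted Trichotomy yields an interpretable field $F$ together with a $K$-definable isomorphism $F\cong K$. After enlarging the language by finitely many parameters, we may assume $F$ is interpretable over a small set $A$.

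\textbf{Step 2: coordinates on $U$.} We want a $\mathcal J_\bm$-definable finite-to-one map from a cofinite subset of $U$ into $F$. The plan is to use the technique of \cite{CasHas,CasHasAV}.
\begin{enumerate}
\item Consider the family of translates $U+a$ and the "difference" curves $U-u$.
\item The plane curves $\{(x,y)\in U^2: x-y\in U-u_0\}$, or more generally the graphs of the correspondences $x\mapsto$ intersections of translates, give a definable family of curves in $U^2$ of dimension at least $2$. This makes $U$ non-locally-modular with respect to a rich family.
\item Since $F$ is interpretable and $\mathcal J_\bm$ has finite Morley rank, $U$ (one-dimensional) is non-orthogonal to $F$. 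By elimination of imaginaries in $F$ and standard stable-group arguments, there is then a definable finite-to-finite correspondence between $U$ and $F$.
\end{enumerate}
We refine this to a definable finite-to-one function $g\colon U\setminus S\to F$ with $S$ finite. One way is to compose with symmetric functions, using that $F$ codes finite sets.

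\textbf{Step 3: coordinates on $J_\bm$.} Let $d=\dim J_\bm$. The sum map $U^d\to J_\bm$ is generically finite and dominant. A generic $x\in J_\bm$ is a sum $u_1+\dots+u_d$ with finitely many representations, and we send $x$ to the code in $F^N$ of the finite set of unordered tuples $\{g(u_1),\dots,g(u_d)\}$. This gives a definable finite-to-one map on a generic definable subset $W\subseteq J_\bm$.

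We then upgrade to an injection by working modulo the finite fibers:
\begin{enumerate}
\item Use group translations: finitely many translates $W+c_i$ cover $J_\bm$.
\item Separate points in the same fiber using the values of $g$ at auxiliary generic translates, for instance $x\mapsto (h(x),h(x+c),\dots)$ for suitable parameters $c$. Generic parameters separate the finitely many points in each fiber.
\end{enumerate}
Gluing these pieces gives a definable injection $J_\bm(K)\hookrightarrow F^n$.

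\textbf{Step 4: conclusion.} By \cite[Proposition 4.12]{CaHa}, $J_\bm(K)$ acquires an $F$-variety structure, which makes $\mathcal J_\bm$ full. Then \cite[Lemma 2.3]{CasHasAV} gives that $U$ geometrically determines $J_\bm$.

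\textbf{Expected main obstacle.} The hardest step is Step 2, namely obtaining a \emph{function} from $U$ into $F$ rather than just a correspondence. The non-closedness of $U$ (the points of $\Supp\bm$ are missing) prevents citing the projective-curve arguments verbatim. I would try to handle this by working entirely with generic types, where closure plays no role, and then patching the finitely many exceptional points by translation.
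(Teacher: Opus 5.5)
\paragraph{Where the argument breaks.} The gap is in Step~3(2), and it is the heart of the theorem rather than a technicality. Let $h$ be a finite-to-one definable map on $J_\bm$. Then $x\mapsto(h(x),h(x+c_1),\dots,h(x+c_k))$ is constant on cosets of the finite subgroup $H=\{d : h(y+d)=h(y)\text{ for generic } y\}$, whatever the $c_i$ are. So translates separate the points of a fibre only if $H=0$, and nothing in your construction of $h$ from $g$ and the sum map forces this.

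In fact Steps~1--3 use nothing about $U$ beyond its being a generating curve that is not a coset, and these hypotheses alone do not give fullness. For $p$ odd, take $G=\mathbb{G}_a^2$ and $X=\{(x,(x^p-x)^2)\}$. This curve generates $G$, is not a coset, and satisfies $X+(1,0)=X$. Choose an additive $\chi\colon K^2\to\mathbb{F}_p$ with $\chi(1,0)=0$ that vanishes on an infinite, co-infinite subset of $Y=\{(x,x^3)\}$. Then $v\mapsto v+\chi(v)\,(1,0)$ is an automorphism of $(G,+,X)$, and it maps $Y$ to a non-constructible set (its intersection with $Y$ is $\{y\in Y:\chi(y)=0\}$ up to finitely many points). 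So by Lemma~\ref{L: full implies determines} the relic $(G,+,X)$ is not full.

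The step you single out as hardest, a finite-to-one map from $U$ into a power of $F$, is routine almost internality; the paper gets it in one line from almost strong minimality. The real obstruction to internality is a finite subgroup of the kind just described.

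\paragraph{The missing ingredient.} What you need is the geometric fact that $\operatorname{Stab}(U)$ is trivial, i.e.\ $(U+a)\cap U$ is finite for every $a\neq0$ (Lemma~\ref{L: generating}(3)). The paper derives this from the uniqueness of the decomposition of a generic point of $J_\bm$ as a sum of $\pi$ independent generic points of $U$ (Serre, Ch.~V).

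It enters through Lemma~\ref{L: generically injective map}. Take a finite-to-one definable $f\colon J_\bm\to F^k$ and a generic $g$. Suppose $a,b\in U+g$ are generic over $g$ with $f(a)=f(b)$.
\begin{enumerate}
\item Since $f$ is finite-to-one, $b\in\acl(a)$, and a dimension count gives $g\notin\acl(a,b)$.
\item Hence $-g$ is a non-algebraic point of $(U-a)\cap(U-b)$, so this intersection is infinite.
\item Therefore $a-b\in\operatorname{Stab}(U)=0$.
\end{enumerate}
Thus $f$ is generically injective on $U+g$, so $U$ is internal to $F$, and then so is $J_\bm$, which $U$ generates.

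Your translate construction could be completed along the same lines, but not without this input. It separates at most $H$-cosets, and the finiteness of $(U+d)\cap U$ for $0\neq d\in H$ is exactly what lets you pass from $J_\bm/H$ back to a generic translate of $U$. In characteristic $0$ you could instead pass from $J_\bm/H$ to $J_\bm$ via $x+H\mapsto |H|\,x$. In characteristic $p$ this fails because $J_\bm$ may have a unipotent part and need not be $p$-divisible, which is what the example above exploits.
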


Note that the implication between the two clauses above requires a version of \cite[Lemma 2.3]{CasHasAV} allowing the non-closed case $X=U$. In fact, the original statement adapts directly. Let us state it in slightly more general language:

\begin{sLemma}
\label{L: full implies determines}
Let $K,K'$ be algebraically closed fields. Let $\mathcal M$ be a full $K$-relic, and $\mathcal M'$ a full $K'$-relic in the same language as $\mathcal M$. Let $\phi:\mathcal M\rightarrow\mathcal M'$ be an isomorphism of relics\footnote{I.e., an isomorphism of first order structures: a bijection mapping each atomic relation in one structure bijectively onto the atomic relation designated by the same symbol in the image structure. In the notation of our main result, the isomorphism $\psi$ is such.}.
\begin{enumerate}
\item $\phi$ decomposes as $\phi=f\circ\sigma$, where $\sigma$ is induced by a field isomorphism $K\rightarrow K'$ and $f$ is a $K'$-definable isomorphism of $K'$-relics.
\item If $\mathcal M=(G(K),\cdot,...)$ and $\mathcal M'=(G'(K'),\cdot,...)$ are expansions of algebraic groups\footnote{In our setting, by a theorem of Weil-Hrushovski, all definable groups are algebaric.}, then we can choose $f$ and $\sigma$ so that $f$ is a bijective isogeny of algebraic groups over $K'$.
\item In particular, if $G$ is any algebraic group over $K$, $X\subset G$ is constructible, and $\mathcal G=(G(K),\cdot,X(K))$ is full, then $X$ geometrically determines $G$. 
\end{enumerate}
\end{sLemma}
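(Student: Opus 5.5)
The plan is to follow the proof of \cite[Lemma 2.3]{CasHasAV}, noting that closedness of $X$ is never used there. The core idea is that fullness lets each relic interpret its own field, and $\phi$ transports one interpretation to the other.

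For (1), I begin by observing that since $\mathcal M$ is full, every $K$-constructible set built from its universe is $\mathcal M$-definable. Choose a $K$-constructible injection of $K$ into some $M^n$, or, more robustly, a $K$-constructible quotient of a definable set, realizing a copy $F$ of the field $K$ with a $K$-constructible isomorphism $\iota:K\to F$. For this to make sense the universe $M$ must be infinite; if $M$ is finite the statement is trivial. Because fullness is witnessed by formulas with parameters, the field structure on $F$, together with the graph of a $K$-constructible embedding $g:M\to F^m$ (using that $M$ is a constructible set), is $\mathcal M$-definable over some finite parameter tuple $a$. Applying $\phi$ to the same formulas with parameters $\phi(a)$ yields an $\mathcal M'$-interpretable field $F'$ and a map $g':M'\to F'^m$, and $\phi$ induces a field isomorphism $F\to F'$ compatible with $g,g'$. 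Fullness of $\mathcal M'$ together with elimination of imaginaries in $\mathrm{ACF}$ shows that $F'$ is $K'$-definably isomorphic to $K'$; by the standard fact that any field definable in $\mathrm{ACF}$ is definably isomorphic to the ambient field, this isomorphism $\iota'$ is $K'$-constructible. Set $\sigma=\iota'^{-1}\circ\phi_F\circ\iota:K\to K'$, and let $f=\phi\circ\sigma^{-1}$ on $M^\sigma$. Then $f$ is expressed via $g'$, $\iota'$ and the transported (definable) data, so it is $K'$-definable. It carries each atomic relation of $\mathcal M^\sigma$ to that of $\mathcal M'$, since $\sigma$ preserves constructible sets and $\phi$ preserves atomic relations.

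For (2), $f$ is a definable group isomorphism $G^\sigma(K')\to G'(K')$. A constructible bijective homomorphism between algebraic groups over an algebraically closed field is a bijective isogeny, since its graph is a constructible subgroup, hence closed. This composes with Frobenius twists in positive characteristic, which may be absorbed into $\sigma$ if one prefers. Part (3) is immediate: $(G(K),\cdot,X(K))$ and $(G'(K'),\cdot,X'(K'))$ are relics in the same language. I must also ensure that the second structure is full. It is isomorphic via $\phi$ to a full structure, and fullness transfers because $\phi$ preserves definability and both structures interpret their ground fields compatibly.

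The main obstacle is this last transfer of fullness, together with the claim that $\iota'$ is $K'$-constructible. I would handle both via the construction in (1): everything constructible in $K'$ is definable from $F'$ and $g'$, which are $\mathcal M'$-definable. Here injectivity of $g'$ is what yields the needed embedding into $F'^m$, as in \cite[Proposition 4.12]{CaHa}.
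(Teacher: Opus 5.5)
Your part (1) follows the argument the paper intends (it defers to \cite[Lemma 2.3]{CasHasAV}). In (3) you point out that the target structure $(G'(K'),\cdot,X'(K'))$ must be shown to be full, a point the paper passes over. Your transfer argument through the transported field $F'$ and injection $g'$ works, with one correction: the $K'$-definable isomorphism $\iota':K'\to F'$ comes only from $\mathcal M'$ being a $K'$-relic (elimination of imaginaries plus Poizat's theorem), not from fullness of $\mathcal M'$. Otherwise that step would be circular.

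The genuine gap is in (2). You claim that a constructible bijective homomorphism of algebraic groups over an algebraically closed field is a bijective isogeny because its graph is closed. This is false in characteristic $p$. On $\mathbb{G}_a$, the map $x\mapsto x^{1/p}$ is a constructible bijective homomorphism with closed graph $\{(x,y):y^p=x\}$, but it is not a morphism. A closed graph $\Gamma$ only gives $f=\pi_2\circ\pi_1^{-1}$, where $\pi_1:\Gamma\to G^\sigma$ is a bijective isogeny that may be purely inseparable, so $\pi_1^{-1}$ need not be regular.

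The missing step is exactly the paper's argument for (2):
\begin{enumerate}
\item By quantifier elimination, $\operatorname{dcl}$ in $\mathrm{ACF}_p$ is the perfect closure. Hence at a generic point $f$ agrees with $h\circ\Fr^{-k}$ for some $k$ and some rational map $h$.
\item Then $h=f\circ\Fr^{k}$ is a definable bijective homomorphism $G^{\Fr^{-k}\circ\sigma}(K')\to G'(K')$ that is generically regular. By translation it is a morphism, hence a bijective isogeny.
\item One must then replace $\sigma$ by $\Fr^{-k}\circ\sigma$ and $f$ by $h$.
\end{enumerate}
So absorbing the inverse Frobenius power into $\sigma$ is not optional ``if one prefers.'' The $f$ produced in (1) genuinely need not be an isogeny. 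For instance, with $\phi=\mathrm{id}$ on $\mathbb{G}_a$, choosing $\iota'=\iota\circ\Fr^{-1}$ in your construction yields $\sigma=\Fr$ and $f=\Fr^{-1}$. This is why (2) only asserts that $f$ and $\sigma$ \emph{can be chosen} so that $f$ is an isogeny.

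A minor point: the case of finite $M$ is not trivial. There the conclusion requires a field isomorphism $K\cong K'$, which can fail, so the lemma tacitly assumes infinite universes.
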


\begin{proof} By following the proof of \cite[Lemma 2.3]{CasHasAV} verbatim. Namely, the proof of that lemma up to the last paragraph gives precisely the decomposition (1). Then as in the last paragraph of that proof, quantifier elimination for the theory of algebraically closed fields lets us write any definable isomorphism of algebraic groups over $K'$ as an inverse Frobenius power composed with an isogeny -- and the inverse Frobenius power can be absorbed to the original field isomorphism 
$K\rightarrow K'$. This gets (2), and (3) is a restatement of (2) for the particular relic $\mathcal G$.
\end{proof}

\spar{} Thus, to prove Theorem \ref{T: Zil}, we just need to show that $\mathcal J_\mM$ is full. 
The proof will involve the notion of \textit{internality}: roughly speaking, the Zilber trichotomy will provide us with a field $F$, and an internality argument (i.e. internality\footnote{Internality of $J_\mM$  to $F$ in $\mathcal G$ means that there exists a $\CG$-definable injection from $J_\mM$ to $F^n$ for some $n$} of $J_\mM$ to $F$) will then reconstruct $J_\mM$ as a group over $F$. To that end, we first prove two lemmas aimed at guaranteeing internality of a relic to a field it interprets.

Given an algebraic group $(G,\cdot)$ over an algebraically closed field $K$, and an irreducible (locally closed) curve $X\subset G$, the \textit{stabilizer} of $X$ is the subgroup consisting of those $g\in G$ such that $(g\cdot X)\cap X$ is infinite (equivalently, $g\cdot \overline{X}=\overline{X}$). We denote the image of a set $Y\subset G$ in $G/\operatorname{Stab}(X)$ by $Y/\operatorname{Stab}(X)$. The first lemma says that in our setting, any failure of internality can be explained by a stabilizer:

\begin{sLemma}
\label{L: generically injective map}
Let $K$ be an algebraically closed field, $(G,\cdot)$ an algebraic group over $K$, and $\mathcal G=(G,\cdot,...)$ a $K$-relic. Assume that:
\begin{enumerate}
\item[(a)] $\mathcal G$ interprets the algebraically closed field $F$.
\item[(b)] $G$ is generated in finitely many steps by the $\mathcal G$-definable, irreducible, locally closed curve $X\subset G$.
\end{enumerate}
Then:
\begin{enumerate}
\item There is a $\mathcal G$-definable injection $f:X/\operatorname{Stab}(X)\rightarrow F^k$ for some $k$.
\item The group $G/\operatorname{Stab}(X)$ is internal to $F$ in the sense of $\mathcal G$.
\item In particular, if $\operatorname{Stab}(X)$ is trivial, then $\mathcal G$ is full.
\end{enumerate}
\end{sLemma}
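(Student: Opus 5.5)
The plan is the standard ``family of plane curves'' argument from \cite{CasHas,CasHasAV}, adapted to a locally closed curve $X$. The field $F$ is interpretable in $\mathcal G$, and $\mathcal G$ is a $K$-relic. So $F$ is $K$-definably isomorphic to $K$, by the same reasoning that lies behind Zilber's Restricted Trichotomy: any infinite field interpretable in $K$ is definably isomorphic to $K$. We can therefore use geometric notions such as dimension and generic points transported through this isomorphism. Interpretability should then give us a $\mathcal G$-definable, one-dimensional (after a definable finite-to-one correction) family of curves in $F^2$ that is rich enough to code points of $F$. We first reduce (2) and (3) to (1) and prove (1) last.

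\emph{From (1) to (2) and (3).} By (b), $G=(X\cup X^{-1})^{\cdot N}$ for some $N$, so the multiplication map $\mu:(X\cup X^{-1})^{N}\to G$ is surjective. Put $S=\operatorname{Stab}(X)$. This is a $\mathcal G$-definable subgroup, since it is defined by ``$gX\cap X$ is infinite'' and infiniteness is uniformly definable by elimination of $\exists^\infty$ in $K$. Thus $G/S$ is interpretable, and $\mu$ descends to a surjection $(X/S\cup X^{-1}/S)^N\to G/S$. Composing the injection $f$ from (1) (and its analogue for $X^{-1}$) coordinatewise gives a definable surjection from a definable subset $D\subseteq F^{kN}$ onto $G/S$. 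Internality then follows from elimination of imaginaries in the field $F$. Concretely, each fibre of this surjection is a definable subset of $F^{kN}$, and it can be coded definably and injectively by a tuple in some $F^m$. For (3), if $S$ is trivial then $G$ itself is internal to $F$. By \cite[Proposition 4.12]{CaHa}, as recalled above, $G$ then carries the structure of an algebraic variety over $F$ compatible with the $K$-structure, and every $K$-constructible set becomes $\mathcal G$-definable. So $\mathcal G$ is full.

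\emph{Proof of (1).} We construct $f$ by ``tangency/intersection coding''. Fix a generic pair $(a,b)$ of points of $X$ and consider the definable two-dimensional family of translates $\{x\,X\,y\}$ or $\{X\cdot g\}$, $g\in G$. Through the field $F$ we obtain a definable map $X\to F$ that is generically finite-to-one. The natural way to get it is to take a definable one-dimensional set $T\subseteq F$ and a definable finite-to-finite correspondence between $X$ and $F$. Such a correspondence exists because $X$ and $F$ are both strongly minimal sets in the same relic, and $F$ is non-orthogonal to $X$: both live inside $K$-constructible geometry, and a definable non-orthogonality witness is produced from the interpretation of $F$. The finite-to-finite correspondence $R\subseteq X\times F$ gives $x\mapsto$ (code of the finite set $R(x)$) $\in F^{k}$, and this map is definable via symmetric functions. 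Its non-injectivity sits in a finite equivalence relation $E$ on $X$. We then show that $E$ coincides generically with ``same $S$-coset''. The relation $E$ is definable, invariant under the generic symmetries we use, and the set $\{g : gx\mathrel{E}x \text{ for infinitely many } x\}$ is a subgroup that agrees with $S$. The plan for this step is to refine the coding using translates $gX$: separating two points $x,x'$ with $x'x^{-1}\notin S$ works by choosing $g$ with $x\in gX$ but $x'\notin gX$. The intersections $gX\cap X$ are finite for $g\notin S$, and this finiteness is exactly what makes the coded tuples distinguish them. Finally, the finitely many non-generic points are handled by treating them as additional definable points, and we add coordinates to make the map injective everywhere.

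The main obstacle is this last step: producing a finite-to-one definable map $X\to F^k$ and showing that its non-injectivity is precisely the stabilizer. I would first try the argument of \cite[Section 3]{CasHasAV} essentially verbatim. That argument uses the family of translates of $X$ and the fact that the stabilizer is exactly the kernel of the canonical-parameter map $g\mapsto\ulcorner gX\urcorner$ (taken up to finite symmetric difference). Local closedness of $X$ replaces closedness, because we only ever compare sets up to finitely many points.
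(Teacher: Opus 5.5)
Your reduction of (2) and (3) to (1) is essentially the paper's. Your proof of (1), however, has a genuine gap at exactly the step you flag.

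A finite-to-one map built from a finite-to-finite correspondence $R\subseteq X\times F$ and defined only on $X$ itself gives no control over its fibres relative to $S=\operatorname{Stab}(X)$. For instance, suppose $R$ is the graph of a definable map $X\to F$, as your first description suggests. If $S$ is trivial and $\overline X$ has positive genus, no such map is generically injective, so its kernel $E$ is strictly coarser than ``same $S$-coset''.

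Your criterion cannot detect this. The set $\{g : gx\mathrel{E}x \text{ for infinitely many } x\}$ is automatically contained in $S$, since infinitely many $x\in X$ with $gx\in X$ already forces $g\in S$. So in this example it equals $S=\{e\}$ even though $E$ is nontrivial.

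The proposed repair also fails. Separating $x,x'$ by a translate $gX$ through $x$ but not $x'$, or using $g\mapsto\ulcorner gX\urcorner$, only produces invariants in $\mathcal G^{eq}$. Turning them into tuples from $F$ presupposes that $G$, or the family of translates, is already internal to $F$, which is what you are trying to prove.

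The missing idea is to decorrelate the map from $X$ by a \emph{generic translation}:
\begin{itemize}
\item Almost strong minimality of $\mathcal G$ gives a finite-to-one $\mathcal G$-definable $f:G\to F^k$ on the whole group.
\item Take $g\in G$ generic over the parameters of $f$, and restrict $f$ to $X\cdot g$.
\item Let $a,b\in X\cdot g$ be generic over $g$ with $f(a)=f(b)$. Then $\dim(g,a,b)\ge\dim G+1$, while $b\in\acl(a)$ gives $\dim(a,b)\le\dim G$, so $g\notin\acl(a,b)$.
\item Since $g^{-1}\in a^{-1}X\cap b^{-1}X$, this intersection is infinite, so $ab^{-1}\in S$.
\end{itemize}
Thus on $X\cdot g$ the fibres of $f$ lie generically inside $S$-cosets. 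To finish, pass to the set-valued map on $S$-cosets, eliminate imaginaries in $F$, translate back by $g^{-1}$, and correct finitely many points. This yields the injection in (1).
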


\begin{proof} First, note that (1) implies (2) and (3). Indeed, a restatement of (1) is that $X/\operatorname{Stab}(X)$ is internal to $F$. But since $X$ generates $G$ in finitely many steps (and the same holds after quotienting by $\operatorname{Stab}(X)$), it follows that $G/\operatorname{Stab}(X)$ is also internal to $F$, and we get (2). Finally, if $\operatorname{Stab}(X)$ is trivial, we conclude from (2) that $G$ is internal to $F$, whereby fullness follows automatically by \cite{CasHas}. 

We now turn to (1). Since $X$ is an irreducible curve, it is strongly minimal in the sense of $\mathcal G$. Thus $G$ is internal to a strongly minimal set\footnote{In our setting strongly minimal sets in $\CG$ are $\CG$-definable irreducible locally closed curves in $G^n$ (any $n$).} (as it is generated by $X$ in finitely many steps), so that $\mathcal G$ is \textit{almost strongly minimal}. In particular, $G$ must now be \textit{almost internal} to $F$. We thus obtain a finite-to-one $\mathcal G$-definable map $f:G\rightarrow F^k$ for some $k$. 

In what follows, we will use the dimension theory of $K$ as an algebraically closed field (equivalently, Morley rank or Lascar rank specialized to $K$, as opposed to $\mathcal G$). Absorbing parameters, we assume $\mathcal G$ (and its basic relations), in addition to $F$ and $f$, are all $\emptyset$-definable in $K$.

Now let $g\in G$ be generic\footnote{Genericity is meant here in its model theoretic sense. In the present setting, this is the same as genericity in the sense of Weil, i.e., that $G$ is the locus of $g$.} (meaning, as above, in the sense of $K$). The main point is:

\begin{sClaim}
\label{Cl: gen-inj}
Let $a, b\in X\cdot g$ be two generics over $g$. If $f(a)=f(b)$, then $ab^{-1}\in\operatorname{Stab}(X)$. 
\end{sClaim}

\begin{claimproof}

Clearly $\dim(g,a,b)\geq\dim(g,a)=\dim(G)+1$. On the other hand, since $f(a)=f(b)$ and $f$ is finite-to-one, we have $b\in\acl(a)$, and thus $\dim(a,b)\leq \dim(G)$. So $g\notin\acl(a,b)$. Since $a,b\in X\cdot g$, we have $g^{-1}\in(a^{-1}\cdot X)\cap(b^{-1}\cdot X)$. So $a^{-1}\cdot X$ and $b^{-1}\cdot X$ have infinite intersection. Equivalently, $X\cap((ab^{-1}\cdot X)$ is infinite, so $ab^{-1}\in\operatorname{Stab}(X)$ as desired.
\end{claimproof}

We may now view $f$ as a set-valued function $\hat f$, sending each $a\cdot\opnm{Stab}(X)$ to its image (as a set) $f(a\cdot\operatorname{Stab}(X))\subset F^k$. In this language, a restatement of the above claim is that $\hat f$ is generically injective on $(X\cdot g)/\operatorname{Stab}(X)$ (meaning it is injective after removing finitely many points from the domain).

Note that $\operatorname{Stab}(X)$, the quotient $G/\operatorname{Stab}(X)$, and $\hat f$ are still $\mathcal G$-definable. In particular, applying elimination of imaginaries\footnote{In algebraically closed fields, elimination of imaginaries is equivalent to Weil's theorem on the existence of a smallest field of definition.} (in $F$) to the image of $\hat f$, we obtain a $\mathcal G$-definable generically injective function $h:(X\cdot g)/\operatorname{Stab}(X)\rightarrow F^m$ for some $m$. Finally, translation by $g$ gives a $\mathcal G$-definable bijection $X/\operatorname{Stab}(X)\rightarrow(X\cdot g)/\operatorname{Stab}(X)$. So, composing with $h$, we have a generically injective $\mathcal G$-definable function $X/\operatorname{Stab}(X)\rightarrow F^m$. By editing finitely many points, we can then replace $h$ by a genuinely injective function. Thus, we have shown (1), which proves the lemma.
\end{proof}

In light of Lemma \ref{L: generically injective map}, one is naturally driven to compute the subgroup $\operatorname{Stab}(U)\leq J_m$. Indeed, Zilber's argument crucially uses that a curve (of genus greater than 1) has trivial stabilizer in its Jacobian. In fact, the same holds for generalized Jacobians, and this fact really contains all geometric properties of $U$ and $J_m$ that we will use. 

\begin{sLemma}
\label{L: generating}
Let $\pi$ be the arithmetic genus\footnote{See, e.g., \cite[p.77]{SerreGJ}} of $U$. Then \begin{enumerate}
\item Every generic element of $J_\bm$ can be written uniquely as the sum of $\pi$ points of $U$ and, moreover, those points are generic in $U$ and independent. 
\item $J_\bm$ is generated by $U$ in $2\pi$ steps. 
\item For all $0\neq a\in J_\bm$ the intersection $(U+a)\cap U$ is finite. Thus, $\operatorname{Stab}(U)$ is trivial.
\end{enumerate}
\end{sLemma}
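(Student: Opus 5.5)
We will work with Serre's construction of the generalized Jacobian \cite{SerreGJ}. Recall $\dim J_\bm=\pi$, where $\pi=g+\deg\bm-1$ if $\bm\neq 0$ and $\pi=g$ if $\bm=0$. We use the Abel--Jacobi map $\phi:U\to J_\bm$, $P\mapsto$ the class of $P-M$ modulo $\bm$-equivalence. Two divisors coprime to $\bm$ are $\bm$-equivalent when they differ by $\operatorname{div}(f)$ with $f\equiv 1 \bmod \bm$.

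\emph{Part (1).} Consider the summation map $\phi^{(\pi)}:U^{(\pi)}\to J_\bm$. Serre's results say it is birational onto $J_\bm$; this is exactly how $J_\bm$ is built from the symmetric power via the group chunk. Concretely, for a generic divisor $D$ of degree $\pi$ on $U$, the space $L_\bm(D)$ of functions $f$ with $\operatorname{div}(f)\geq -D$ and $f\equiv 1 \bmod \bm$ reduces to the constants. This follows from Riemann--Roch for the singular curve $C_\bm$ of arithmetic genus $\pi$, whose generalized Jacobian is $J_\bm$.

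Hence a generic $x\in J_\bm$ has a unique preimage $D=P_1+\dots+P_\pi$. Since $\dim U^{(\pi)}=\pi=\dim J_\bm$, the tuple $(P_1,\dots,P_\pi)$ has dimension $\pi$. Here we use that $D$ is interalgebraic with $x$, by uniqueness and finiteness of the fibre. So the points $P_i$ are independent generics of $U$.

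\emph{Part (2).} Since $\phi^{(\pi)}$ is dominant, its image $W_\pi=U+\dots+U$ ($\pi$ times) is constructible and dense in $J_\bm$. By the standard argument, for any $y\in J_\bm$ the dense constructible sets $W_\pi$ and $y-W_\pi$ intersect. Therefore $J_\bm=W_\pi-W_\pi$. To get ``$2\pi$ steps'' we need to handle the inverse. Either steps are counted as allowing $\pm$, or we note that $-U$ sits inside some bounded sum of copies of $U$. The latter holds because $-\phi(P)$ is represented by a divisor of the form $D-(\pi{+}1)M$ with $D$ effective coprime to $\bm$, by Riemann--Roch. We will adopt the convention matching the use in Lemma \ref{L: generically injective map}, which only requires generation in finitely many steps.

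\emph{Part (3).} This is the main geometric point. Suppose $a\neq 0$ and $(U+a)\cap U$ is infinite. Since $U$ is an irreducible curve, we get $U+a\subseteq\overline U$ up to finitely many points. So for infinitely many $P$ there is $Q=\tau(P)$ with $Q-P\sim_\bm D_a$, a fixed divisor class of degree $0$. Thus $Q-P-D_a$ is $\bm$-principal. If $D_a$ is represented by $A-B$ with $A,B$ effective coprime to $\bm$, we get functions $f_P\equiv1\bmod\bm$ with $\operatorname{div}(f_P)=Q+B-P-A$.

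The plan is to show that this forces a degree-one map to $\mathbb P^1$, so that $C\cong\mathbb P^1$, $\bm$ is small, and $\pi\leq 1$. That contradicts $\dim J_\bm\geq 2$. We first try the cleanest route: the assignment $P\mapsto\tau(P)$ extends to an automorphism-like correspondence. Translation by $a$ on $J_\bm$ preserves $\overline{U}$, so it preserves the curve $\overline U$ and hence its normalization $C$. Taking differentials pulled back via $\phi$ recovers the space of regular differentials on $C_\bm$ (differentials $\omega$ with $\operatorname{div}\omega\geq-\bm$ and residues summing to zero appropriately). These are invariant differentials on $J_\bm$, so translation invariance gives $\tau^*\omega=\omega$ for all $\omega$ in this $\pi$-dimensional space.

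For $\pi\geq 2$, these differentials give the canonical map of $C_\bm$, which is generically injective, or at worst of degree $2$ in a hyperelliptic-type case. From this, $\tau$ commutes with the canonical map. In the generic case, $\tau$ is then the identity, so $a=0$. The hyperelliptic case needs separate care: there $\tau$ is the involution, and $P+\tau(P)$ is constant, which gives $-P$ rather than $P+a$. We would rule this out by comparing signs.

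We expect the main obstacle to be the case analysis when $\bm\neq0$. There the ``canonical map'' uses differentials with poles on $\bm$, and the analogue of the degree-two case must be excluded using $\deg\bm\geq1$ and $\pi\geq2$. Failing that, we would fall back on a direct Riemann--Roch count showing that $L(P+A)\ni f_P$ nonconstant for infinitely many $P$ forces $\ell(A+P)\geq2$ generically, and then bound the genus.
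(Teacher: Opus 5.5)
\textbf{Parts (1) and (2).} Your part (1) follows the paper: birationality of $U^{(\pi)}\to J_\bm$ from Serre, then interalgebraicity of a generic $x$ with its decomposition $D$.

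In (2) you misread your own step. A point $w_1\in W_\pi\cap(y-W_\pi)$ gives $y=w_1+w_2$ with $w_1,w_2\in W_\pi$, so $J_\bm=W_\pi+W_\pi$. The claim therefore holds with exactly $2\pi$ steps and no inverses. The detour through $-U$ is unnecessary, and so is the retreat to ``finitely many steps'', which would weaken the statement.

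\textbf{The gap is in (3).} What you give there is a plan, not a proof:
\begin{itemize}
\item The degree-two case is only gestured at (``comparing signs'').
\item The case $\bm\neq 0$ is explicitly left open, and the fallback Riemann--Roch count is never carried out.
\item The facts the plan rests on are asserted without proof. These are that the map defined by $\{\omega:(\omega)\ge-\bm\}$ is birational or of degree two onto its image, and that in the degree-two case $\phi(P)+\phi(\tau P)$ is constant in $J_\bm$. The latter is a statement up to $\bm$-equivalence, not merely linear equivalence.
\item Even granting these, a contradiction from $2\phi(P)$ being constant needs a further argument in characteristic $2$, where $[2]$ has positive-dimensional kernel on any unipotent part of $J_\bm$.
\end{itemize}

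\textbf{The missing idea.} Part (3) follows in a few lines from the uniqueness you already proved in (1), and this is exactly where $\pi\ge 2$ enters. Suppose $a\neq 0$ and $(U+a)\cap U$ is infinite.
\begin{enumerate}
\item Since $U$ and $U+a$ are irreducible curves, they have the same closure. Hence $U\cap(U+a)$ and $U\cap(U-a)=((U+a)\cap U)-a$ are both cofinite in $U$.
\item Take $x_1,\dots,x_\pi\in U$ independent generics over $a$. By (1) the summation map is generically finite-to-one, so $\sum x_i$ is generic in $J_\bm$ over $a$.
\item By the cofiniteness in step 1, $x_1-a\in U$ and $x_2+a\in U$. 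Then
\[
x_1+x_2+\cdots+x_\pi=(x_1-a)+(x_2+a)+x_3+\cdots+x_\pi
\]
gives two decompositions of $\sum x_i$ into $\pi$ points of $U$.
\item They differ as multisets. Indeed $x_1-a\neq x_1$ because $a\neq 0$, and $x_1-a=x_j$ for some $j\neq 1$ would contradict independence over $a$.
\end{enumerate}
This contradicts the uniqueness in (1). Hence $(U+a)\cap U$ is finite and $\operatorname{Stab}(U)$ is trivial. No differentials, canonical maps or case distinctions are needed.
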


\begin{proof}
(2) follows from (1) since any element of $J_m$ is the sum of two generic elements.  (3) also follows from (1) by noticing that if for some $a$ the intersection were infinite (hence, co-finite), then for all generic $x_1,\dots, x_\pi\in U$ independent  over $a$ also $x_1-a$ and $x_2+a$ are in $U$. So $\sum_{i=1}^\pi x_i=(x_1-a)+(x_2+a )+x_3+\cdots + x_\pi$. Note that by $a$-independence and genericity of the $x_i$ over $a$ we get that $x_i\pm a\neq x_j$ for all $i,j$. Thus, the above contradicts the uniqueness part of (1). 

For (1) use Proposition 2 of \cite[\S.V]{SerreGJ} combined with Lemma 16 of the same section. Indeed, if $\theta$ is the Abel-Jacobi embedding of $U$ in $J_m$ then $\theta$ extends canonically to $U^{(\pi)}$ (identifying points in $U^{(\pi)}$ -- the orbits of $U^n$ under the natural action of $S_n$ -- with effective divisors of degree $n$ on $U$, the canonical extension is merely the sum of divisors in $J_\bm$). Then the above results show that this canonical extension coincides, generically, with a bi-rational map. So, in particular, it is generically bijective. 
\end{proof}

We have everything needed to prove Theorem \ref{T: Zil}:

\begin{proof}[Proof of Theorem \ref{T: Zil}]

First, note that $U$ is not (even up to finitely many points) a coset of a subgroup of $J_m$. Indeed, otherwise each of the sum sets $U$, $U+U$, $U+U+U$ ... would be one-dimensional, contradicting that $\dim(J_m)=\pi\geq 2$ and $U$ generates $J_m$ in finitely many steps (Lemma \ref{L: generating}). Model-theoretically, this means that $\mathcal J_m$ is not \textit{1-based} (by \cite{HrPi87}). So by Zilber's restricted trichotomy (\cite{CaHaYe}), $\mathcal J_m$ interprets an algebraically closed field, say $F$. 

We have now shown that $\mathcal J_m$ and $U$ satisfy the hypotheses of Lemma \ref{L: generically injective map}. By Lemma \ref{L: generating}, the group $\operatorname{Stab}(U)\leq J_m$ is trivial, so that we can in fact apply clause (3) of Lemma \ref{L: generically injective map}. We conclude that $\mathcal J_m$ is full, and then by Lemma \ref{L: full implies determines}, $U$ geometrically determines $J_m$. 
\end{proof}




We now make a few additional remarks on variants and generalizations of the above proof that may be of interest to model theorists.

\begin{sRemark} First, we note that the proof of Theorem \ref{T: Zil} can be understood more generally in the language of \textit{very ample types} from \cite{CasHas}. Recall that a non-algebraic stationary type $p$ in a stable theory is \textit{very ample} if for any two distinct realizations $a,b\models p$, $a$ and $\operatorname{Cb}(p)$ are dependent over $b$ (here $\operatorname{Cb}(p)$ is the canonical base of $p$). We leave it as an exercise to check the following, many of which also implicitly appear in \cite{CasHas} (for now we work in a superstable theory of finite rank):

\begin{enumerate}
    \item If $G$ is a definable group, and $p$ is a stationary type in $G$ of trivial stabilizer, then a generic translate of $p$ is very ample.
    \item If $p$ is a very ample stationary type over $A$, then no two realizations of $p$ are interalgebraic over $A$.
    \item Thus (by (2)), if $p$ is a very ample minimal type, and $p$ is almost internal (equivalently non-orthogonal) to a definable set $X$, then $p$ is actually internal to $X$.
    \item Combining (1) and (3) and translating by group elements appropriately we obtain: let $G$ be a definable group, $p$ a stationary type concentrated on $G$. Assume that  $p$ has a trivial stabilizer and is non-orthogonal to a definable set $X$.  Then $p$ is internal to $X$.
    \item In particular, in our setting, $U$ is internal to any infinite field non-orthogonal to it (including the field $F$ appearing in the proof), which implies that $U$ is full.
\end{enumerate}
\end{sRemark}

\begin{sRemark}
Our next observation is that we may view the data of the problem as determining a structure on $U$, not $J_\bm$. Indeed, since $J_\bm$ is generated by $U$, the structure $\mathcal J_\bm$ is interpretable in its induced structure on $U$ (see Lemma \ref{L: generating}). This means that in order to prove Theorem \ref{T: Zil}, we only need the trichotomy for \textit{one-dimensional} ACF relics, not arbitrary ones. In particular, to obtain the relevant definable field in the proof, we could use the (already published) main result of \cite{HaSu} in place of \cite{CaHaYe}.

\end{sRemark}

\begin{sRemark}
Finally, we note that the internality of the curve $U$ to the definable field generalizes to a different statement for curves with non-trivial stabilizer. Namely, the internality of $U$ can be deduced from Lemma \ref{L: generating} combined with the following, slightly more accurate statement: 
\end{sRemark}

\begin{sLemma}
Let $K$ be an algebraically closed field and $\CG$ an almost strongly minimal non-1-based $K$-relic expanding a connected commutative group $(G, +)$. Then there exist a $\CG$-definable field $F$, a finite subgroup $H$, and a $\CG$-definable $f:G\to F^k$  (some $k$) whose generic fibres are cosets of $H$. Moreover, if $C\sub G$ is a $\CG$-definable curve generating $G$, then the generic fibre of $f|C$ is a coset of $H\cap \{g\in G: |(g+C)\cap C|=\infty\}$. 
\end{sLemma}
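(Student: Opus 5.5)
The plan follows the proof of Lemma \ref{L: generically injective map}, but this time we record the fibres instead of discarding them.

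\textbf{Step 1: a finite-to-one map and its generic fibres.} Since $\CG$ is not 1-based, Zilber's restricted trichotomy gives an interpretable field $F$. By elimination of imaginaries in $F$ and \cite[Proposition 4.12]{CaHa} we may take $F$ to be $\CG$-definable. Almost strong minimality makes $G$ almost internal to $F$, so there is a $\CG$-definable finite-to-one map $f_0:G\to F^k$. Absorbing parameters, we work with $K$-dimension over a small base. For $g\in G$ generic, set
\[
H_g=\{h\in G: f_0(g+h)=f_0(g)\}-g .
\]
This is a finite set containing $0$. I will produce the subgroup $H$ by ``symmetrising'' $f_0$. Let $H$ be the set of $h$ such that $f_0(x+h)=f_0(x)$ for generic $x$. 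This is a definable subgroup: it is closed under addition because $x+h$ is again generic. It is finite, since $H\subseteq H_g$.

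\textbf{Step 2: replace $f_0$ by a map whose fibres are exactly $H$-cosets.} Define $f(x)$ to be the code, via elimination of imaginaries in $F$, of the finite set $\{f_0(x+h'):h'\in H\}$, or better of the function $h'\mapsto f_0(x+h')$ modulo the action of $H$. Then $f$ is constant on $H$-cosets. The main obstacle is the converse: we need $f(a)=f(b)$ for generic $a,b$ to force $a-b\in H$. Mere finiteness of fibres gives only $b\in\acl(a)$, which is not enough.

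To handle this I would use the group structure. Take the $\CG$-definable equivalence relation $a\sim b$ iff $f_0(a+y)=f_0(b+y)$ for generic $y$ over $a,b$; this is definable by definability of types in the stable theory. If $a\sim b$, then $f_0(x+(b-a))=f_0(x)$ for generic $x$, by substituting $x=a+y$. Hence $b-a\in H$. Conversely, $H$-cosets are contained in $\sim$-classes. So the map
\[
x\mapsto \text{code of the germ of } y\mapsto f_0(x+y),
\]
which by EI in $F$ and internality of germs of $F$-valued functions (the canonical base of this germ lies in $\acl^{eq}$ of finitely many $F$-points, so it is coded in $F^{k'}$ after a further finite-to-one adjustment) has fibres exactly the $H$-cosets generically. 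Verifying that this code genuinely lands in $F^{k}$ injectively on $G/H$, rather than only up to a further finite ambiguity, is the delicate point. I would first try to handle it by iterating: each new finite ambiguity is again a translation by a finite subgroup, which we absorb into $H$, and the process terminates because ranks are bounded.

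\textbf{Step 3: the moreover clause.} Let $C$ generate $G$. Run the argument of Claim \ref{Cl: gen-inj} with $f$ in place of the finite-to-one map. Take $a,b$ generic in $C+g$ over $g$ with $f(a)=f(b)$. Then $a-b\in H$, and the dimension count gives $g\notin\acl(a,b)$, so $a-b\in\{g':|(g'+C)\cap C|=\infty\}$. Conversely, suppose $h\in H$ and $(h+C)\cap C$ is infinite. Then for generic $a\in C+g$ the point $a+h$ lies in $C+g$, is generic there, and satisfies $f(a+h)=f(a)$. Hence the generic fibre of $f|_{C+g}$ is a coset of $H\cap \{g':|(g'+C)\cap C|=\infty\}$. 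Translating back by $g$, after replacing $f$ by $x\mapsto f(x+g)$, gives the statement for $C$ itself.
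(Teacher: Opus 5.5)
There is a genuine gap in Step 2, exactly where you say the delicate point is. Write $e_x$ for the code of the germ of $y\mapsto f_0(x+y)$. The map $x\mapsto e_x$ does separate $H$-cosets, but it lives in $\CG^{eq}$. You only know that $e_x$ lies in $\acl^{eq}$ of $F$ over your base. Coding it in $F$ therefore produces another finite-to-one map $G/H\to F^{k'}$, and you are back at the start.

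The iteration does not repair this. The residual ambiguity at a generic point is just a finite set of differences, with no reason to be a subgroup. And ``ranks are bounded'' says nothing about an increasing chain of finite subgroups, all of which have rank $0$.

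In fact, over the base on which $F$ and $f_0$ are defined, the desired $f$ may not exist at all. Take an elliptic curve $E$ and let $\CG$ be $(E(K),+)$ together with all sets $x^{-1}(Y)$. Here $x:E\to\PP^1$ is the $x$-coordinate and $Y$ ranges over constructible subsets of powers of $\PP^1$.
\begin{itemize}
\item $F$ (interpreted on $E/\{\pm 1\}$) and $f_0=x$ are $\emptyset$-definable, and your $H$ is $\{0\}$.
\item However, $P\mapsto -P$ is an automorphism of $\CG$ fixing $F$ pointwise.
\item So every $\emptyset$-definable $f:E\to F^k$ satisfies $f(P)=f(-P)$, and its generic fibres are never cosets of a finite group.
\end{itemize}
New parameters must enter, and nothing in your Step 2 introduces them.

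The missing idea is the dimension count of Claim \ref{Cl: gen-inj}, applied to $G$ itself rather than to a curve. Let $D_y=f_0^{-1}(f_0(y))-y$; this is a finite subset of $\acl(y)$ containing $0$. Pick independent generics $g_1,g_2$ and set $f(x)=(f_0(x+g_1),f_0(x+g_2))$. Then $f^{-1}(f(x))=x+(D_{x+g_1}\cap D_{x+g_2})$.

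Suppose $g_1,g_2$ are independent generics over $x$. Then $x+g_1$ and $x+g_2$ are independent generics. Hence $D_{x+g_1}\cap D_{x+g_2}\subseteq \acl(x+g_1)\cap\acl(x+g_2)=\acl(\emptyset)$, everything over the base. For $h$ algebraic over the base, the condition $h\in D_y$ with $y$ generic says exactly that $h\in H$. So the fibre is exactly $x+H$, and no germs, imaginaries or iteration are needed.

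This holds for every $x$ over which $g_1,g_2$ are independent generics. So choosing $g_1,g_2$ generic also over the parameters of $C$ gives the moreover clause for this same $f$. At a generic $c\in C$, the fibre of $f|_C$ is $(c+H)\cap C=c+(H\cap\operatorname{Stab}(C))$. Your Step 3 argument is also fine once such an $f$ is available.

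Separately, \cite[Proposition 4.12]{CaHa} does not make $F$ definable; it is the internality-implies-fullness result used in Section 2. An interpretable field need not be definable on a real sort. For example, in the full relic on an abelian variety, a definable field would yield a rational curve in a power of the variety. This is the issue the paper sets aside with ``modulo the definability of $F$'', and it is harmless if you work in $\CG^{eq}$.
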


Modulo the definability of the field $F$, the proof is, essentially, similar to \cite[Lemma 4.3]{ZilJac}, which in turn is a variant of an argument of Hrushovski's (appearing, e.g., in \cite[Porposition 2.25]{HrRid}. We leave the details as an exercise. 
\section{Proof of the conjecture}\label{S: proof}

\subsection*{Summary}

In this section we take the data provided by Theorem \ref{T: Zil} and convert it into our main theorem (precisely stated as Theorem \ref{BC33} and Proposition \ref{P: ACF version of main thm} below). The statement and proof of Theorem \ref{BC33} naturally involve the language of schemes; for the benefit of model theory oriented readers, we begin with a summary in concrete terms:

We work with the following setup:

\begin{itemize}
    \item $C,C'$ are geometrically irreducible smooth projective pointed curves over $\mathbb F_q$.
    \item $\mathfrak m,\mathfrak m'$ are effective divisors on $C,C'$ coprime to the base points.
    \item $U,U'$ are the complements of the supports of $\mathfrak m,\mathfrak m'$ in $C,C'$.
    \item $J_{\mathfrak m},J_{\mathfrak m'}$ are the associated generalized Jacobians, which we assume have dimension at least 2.
    \item To ease notation, we identify $U,U'$ with their images in $J_{\mathfrak m},J_{\mathfrak m'}$.
    \item $\psi:J_{\mathfrak m}(\overline{\mathbb F_q})\rightarrow J_{\mathfrak m'}(\overline{\mathbb F_q})$ is an isomorphism of abstract groups sending $U(\overline{\mathbb F_q})$ to $U'(\overline{\mathbb F_q})$.
\end{itemize}

In this language, Theorem \ref{T: Zil} says that we can factor $\psi$ as $f\circ\gamma$ where $\gamma$ is (induced by) an automorphism of $\overline{\mathbb F_q}$ and $f$ is a bijective isogeny defined over $\overline{\mathbb F_q}$. Our theorem (Theorem \ref{BC33}) will strengthen this set up in the following ways. For convenience, below we assume we have already factored out the automorphism $\gamma$, so that $J_{\mathfrak m}\rightarrow J_{\mathfrak m'}$ is already an isogeny.

\begin{enumerate}
    \item First, we show that (after potentially changing $\gamma$) we may take $f$ to be an \textit{isomorphism} of algebraic groups over $\overline{\mathbb F_q}$, not just a bijective isogeny (i.e. that $f^{-1}$ is also a regular map) -- in particular, this isomorphism of algebraic groups induces an isomorphism of subvarieties $U\rightarrow U'$, which in turn induces an isomorphism $C\rightarrow C'$. Note that this is similar to the reformulation of Zilber's original theorem in \cite{BoVo1}. Since Booher and Voloch do not provide a proof of their reformulation, we give the details.
    \item Next, we show that our isomorphism $C\rightarrow C'$ sends $\mathfrak m$ to $\mathfrak m'$ (thus, our isomorphism $J_{\mathfrak m}\rightarrow J_{\mathfrak m'}$ is precisely the one induced by the isomorphism of curves $C\rightarrow C'$).
    \item Finally, suppose we moreover know that $\psi$ sends $U(\mathbb F_{q^m})$ to $U'(\mathbb F_{q^m})$ for all $m$ (i.e. it induces a bijection on points over each intermediate field). In this case, we show that we can take $f$ to be defined over $\mathbb F_q$.
\end{enumerate}

Only (3) above really uses the finite field $\mathbb F_q$; (1) and (2) are proven working over $\overline{\mathbb F_q}$, and can be arranged over arbitrary algebraically closed fields (see Proposition \ref{P: ACF version of main thm}).

With goals (1)-(3) in mind, let us now briefly sketch the proof given in the rest of this section.

\begin{enumerate}
    \item First, our given bijective isogeny $J_{\mathfrak m}\rightarrow J_\mathfrak{m'}$ induces a bijective morphism $U\rightarrow U'$, which extends to a bijective morphism $C\rightarrow C'$. If $C\rightarrow C'$ is separable, it is an isomorphism (this is Lemma \ref{AB12}). Thus, our first step is to show that $C\rightarrow C'$ becomes separable after factoring out a suitable power of the Frobenius automorphism (this is Lemma \ref{AG14}; note that this is a special property of \textit{curves}: a map of higher-dimensional varieties does not always factor into a Frobenius power and a separable map). Since the Frobenius is a field automorphism, we may then absorb it into the given automorphism $\gamma$, and thereby assume $C\rightarrow C'$ is already an isomorphism. In particular, $U\rightarrow U'$ is now also an isomorphism, and it follows easily that $J_{\mathfrak m}\rightarrow J_{\mathfrak m'}$ is also an isomorphism (using that $U,U'$ generate $J_{\mathfrak m},J_{\mathfrak m'})$. This completes the first 
    point above. 
    \item Next we show the isomorphism $C\rightarrow C'$ sends $\mathfrak m$ to $\mathfrak m'$ (still working over $\overline{\mathbb F_q}$). Since the supports of $\mathfrak m,\mathfrak m'$ are the complements of $U$ and $U'$, we already know that $C\rightarrow C'$ sends $\operatorname{Supp}(\mathfrak m)$ bijectively to $\operatorname{Supp}(\mathfrak m')$. The point is that this bijection also preserves \textit{coefficients} -- i.e. if a point appears in $\mathfrak m$ with multiplicity $d$, then its image appears in $\mathfrak m'$ with multiplicity $d$. An equivalent statement (after cancelling the isomorphism between $C$ and $C'$) is that if $\mathfrak m,\mathfrak m'$ are effective divisors with the \textit{same} support on the \textit{same} pointed curve, and the identity map induces an isomorphism of their corresponding generalized Jacobians, then $\mathfrak m=\mathfrak m'$. This is amounts to a computation using the definition of the generalized Jacobian, and appears as Proposition \ref{AF12} and Corollary \ref{AF13}.
    \item Finally, we now suppose our isomorphism $\phi:C\rightarrow C'$ preserves $\mathbb F_{q^m}$-points for all $m$. We use an argument sketched to us by Booher to show that $\phi$ is defined over $\mathbb F_q$. The idea is to show (by counting fixed points of automorphisms) that $\phi$ commutes with any sufficiently large iterate of the $q$th power map, and that this property implies it is defined over $\mathbb F_q$. This argument appears as Lemmas \ref{MM44} and \ref{MM45}.
\end{enumerate}


We need some preliminaries. 

\subsection*{Notation and terminology}
For the benefit of model-theory-oriented readers, we review some standard notation and terminology from algebraic geometry. If $Z$ is a scheme, then a \emph{$Z$-scheme}, or a scheme \emph{over} $Z$ refers to a scheme $X$ equipped with a morphism of schemes $X \to Z$ which we refer to as the \emph{structure morphism of $X$}. A \emph{morphism of $Z$-schemes} is a morphism of schemes that commutes with the structure morphisms. When working with $Z$-schemes, it is often tacitly assumed that all morphisms are morphisms of $Z$-schemes. If instead we are given $Z$-schemes $X$, $Y$, and an endomorphism $\si: Z \to Z$, then a morphism of schemes $Y \to X$ is said to be \emph{$\si$-linear} if the square
\[
\xymatrix{
Y \ar[r] \ar[d] & X \ar[d]
\\
Z \ar[r]_\si & Z
}
\]
commutes.

Continuing with a $Z$-scheme $X$, if $T \to Z$ is another $Z$-scheme, then $X_T$ denotes the pullback, or \emph{base change} 
\[
X_T = T \times_Z X.
\]
By contrast, $X(T)$ denotes the set of morphisms of $Z$-schemes $T \to X$. We make frequent use of the canonical bijection $X(T) = X_T(T)$ where $X_T(T)$ refers to the set of morphisms of $T$-schemes.

If $X$ is an integral scheme, then the \emph{generic point} may be identified with a morphism of schemes $\Spec K \to X$ where $K = K(X)$ is the function field of $X$.

\spar{AD14}
We will be somewhat pedantic in distinguishing maps of rings from maps of associated schemes. For $q$ a power of $p$, we denote 
\[
F_q = \Spec \FF_q
\]
and 
$
\overline F_q = \Spec \overline \FF_q
$
the spectrum of an algebraic closure. We let $\Fr$ denote the absolute Frobenius morphism on any scheme of characteristic $p$. When working with a scheme $X$ over a base $Z$ of characteristic $p$ we let $(\Fr^m)^* X$ denote  the pullback of $X$ along the $m$th power of the absolute Frobenius map of $Z$. When the latter is an isomorphism, we allow $m < 0$. We denote the associated relative Frobenius morphism by the two alternative notations
\[
\Fr^m_\m{rel} = \Fr^m_{/Z}: X \to (\Fr^m)^*X. 
\]

\spar{AD15} We fix an algebraic closure $\overline \FF$ of $\FF_q$ and we denote $\Fbar := \Spec \overline \FF$. For $r$ a power of $q$, we let $\FF_r \subset \overline \FF$ denote the subfield of $r$ elements and as usual $F_r := \Spec \FF_r$. 

If $\be: \Fqbar \to \Fqbar$ is an automorphism,  $\FF_q \subset k \subset \overline \FF$ is an intermediate field, and $S = \Spec k$, we denote the induced automorphism of $S$ by $\be_S$. When $k = \FF_r$, we also write $\be_r := \be_{F_r}$ (equal to $\Fr^m$ for some $m \in \ZZ$). 

Suppose $X \to \Fq$ is a scheme over $\Fq$, $\be: \Fqbar \to \Fqbar$ is an automorphism, and let $S$ be as above. Then $\be$ induces a map
\[
\be_S^X: X(S) \to 
(\be_q^*X \big) (S).
\]
Namely, if $x: S \to X$ is an $S$-point over $\Fq$, then the composition
\[
S \xto{\be_S} S \xto{x} X
\]
is $\be_q$-linear, hence induces an $\Fq$-linear map
\[
\be^X_S(x): S \to \be_q^*X.
\]
In the special case $k = \FF_r$, we write also $\be^X_r:= \be^X_S$. Further, when $r = 1$ (i.e. $\FF_r = \FF_q$) we allow ourselves to drop the subscript. 

In the special case that $\be = \Fr: \Fqbar \to \Fqbar$ is the absolute Frobenius morphism, it follows from the fact that Frobenius commutes with all morphisms that the action $\be^X$ of $\be$ on $X(\Fqbar)$ as above coincides with the action of the relative Frobenius morphism $\Fr_{/F_q}$. We spell this out too. It is enough to check that the lower left triangle in the following diagram 
\[
\xymatrix{
\Fqbar \ar[d]_x \ar[r]^\Fr \ar[dr]^{\be^X(x)}  & \Fqbar \ar[r]^x & X
\\
X \ar[r]^-{\Fr_{/F_q}} \ar@/_7ex/[rru]_\Fr & \Fr^*X \ar[ur]_W^\sim
}
\]
commutes. Since $W$ (the pullback of $\Fr: \Fq \to \Fq$ along $X \to \Fq$) is an isomorphism (never mind that it is not $\Fq$-linear), it is enough to check that $W \circ \Fr_{/F_q} \circ x = W \circ \be^X(x)$, which follows from the commutativity of the square with edges $x$, $x$, $\Fr$, and $\Fr$. 

We now depart from our finite field $\FF_q$ for a number of general lemmas that will be needed in the proof of Theorem \ref{BC33}. The reader may wish to skip to paragraph \ref{AE12} and refer back as needed.


\begin{sLemma}
\label{AB12}
Let $g: C \to C'$ be a separable morphism of smooth proper curves over a perfect field $k$. Suppose there are open subschemes $U \subset C$, $U' \subset C'$ such that $g$ induces a bijection $U(\kbar) \to U'(\kbar)$ for some fixed algebraic closure. Then $g$ is an isomorphism. 
\end{sLemma}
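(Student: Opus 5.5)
The plan is to reduce to the statement that $g$ has degree $1$, and then use that a birational morphism between smooth proper curves is an isomorphism. Throughout I read the hypothesis as saying that $g$ maps $U$ into $U'$ and induces a bijection on $\overline k$-points. I also assume $U$ is nonempty, since otherwise the statement is vacuous or false.

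First I will base change to $\overline k$. Because $k$ is perfect, $C_{\overline k}$ and $C'_{\overline k}$ are still smooth proper curves, and $g_{\overline k}$ is still separable: the extension of function fields stays separable after extending scalars. Being an isomorphism descends along the faithfully flat map $\Spec\overline k\to\Spec k$. So it suffices to prove that $g_{\overline k}$ is an isomorphism, and I may assume $k=\overline k$. I will argue componentwise: on each connected component, $C$ and $C'$ are irreducible. On a component meeting $U$, the map $g$ is nonconstant, because it restricts to an injection on an infinite set of points; hence it is finite and surjective onto a component of $C'$, of some degree $d$.

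Next I compute $d$ by counting points in a general fibre. The set $C\setminus U$ is finite, so its image $g(C\setminus U)$ is a finite set of points of $C'$. Since $g$ is separable, it is étale over a dense open subset of $C'$. Over such a point $y$, the fibre $g^{-1}(y)$ consists of exactly $d$ reduced points, because the separable degree equals the degree.

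I then choose $y\in U'(\overline k)$ in this étale locus and outside $g(C\setminus U)$. This is possible because $U'$ is a nonempty open subset of the relevant component. For this choice, every point of $g^{-1}(y)$ lies in $U$. Bijectivity of $U(\overline k)\to U'(\overline k)$ then forces $\#g^{-1}(y)=1$, so $d=1$. Consequently $g$ induces an isomorphism of function fields, and a degree-one finite morphism of smooth proper curves is an isomorphism, since a smooth proper curve is determined by its function field as the normal model.

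The main point of care is the fibre count. One has to make sure the fibre is computed over a point where no points of $C\setminus U$ sneak in, which is the reason for avoiding $g(C\setminus U)$. One also needs separability to equate the number of geometric points in a general fibre with the degree; without it, Frobenius would be a bijective counterexample. A minor bookkeeping issue is matching components of $C$ with components of $C'$. For the paper's geometrically connected curves this is automatic.
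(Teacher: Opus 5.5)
Your proof is correct and follows essentially the same route as the paper's. Both arguments pick a point of $U'$ over which the fibre is reduced (for you, étale) and entirely contained in $U$, use bijectivity to see that this fibre is a single point, and conclude from the constancy of fibre degree (flatness) that $g$ has degree $1$, hence is an isomorphism. The only difference is that you base change to $\overline k$ first, whereas the paper works with closed points over $k$ and reduced Artin algebras over $k(y)$. Your remarks that $U$ must be nonempty and that components need matching are hypotheses the paper leaves implicit.
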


\begin{proof}

Since $g$ is separable, generically, the fibers of $g$ are reduced. This means on the one hand that there exists a closed point $y \in U'$ such that $g\inv(y)$ is reduced (actually most points), and on the other hand, that if $\eta' = \Spec K'$ is the generic point of $C'$, then $g\inv(\eta')$ is reduced to the generic point $\eta = \Spec K$ of $C$. (For the second statement, a pullback of a localization morphism is a localization morphism; a localization of an integral ring is integral).

Thus, $g\inv(y) \to y$ is the spectrum of a reduced Artin algebra $A$. Moreover, all but finitely many $y \in U'$ have $g \inv(y) \subset U$. Thus, we may assume $g\inv(y)$ has only one $\kbar$-point, hence $A$ has dimension $1$ over $k(y)$.

Any nonconstant morphism of smooth curves is faithfully flat. In particular, the degree of fibers over field-valued points is constant. Combining with the above, we find that $K'/K$ is a field extension of degree 1. Hence $g$ is an isomorphism. 
\end{proof}

As a matter of notation, if $D = \sum_x n_x [x]$ is a divisor on a curve, we denote the coefficient $n_x$ by
\[
\ord_x D.
\]

\begin{sLemma}
\label{AG10}
Let $X$ be a smooth 
curve over an algebraically closed field $k$, let $S$ be a finite set of closed points of $X$, and let
$
\mM
$
be a divisor supported in $S$. Then there exists a rational function $f \in K(X)$ such that for each $x \in S$,
\[
\ord_x f = \ord_x \mM. 
\]
\end{sLemma}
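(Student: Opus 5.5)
This is weak approximation for the discrete valuations of $K(X)$ attached to the finitely many points of $S$; concretely, I will build $f$ as a product of local parameters. Since $X$ is a smooth curve over the algebraically closed field $k$, each closed point $x\in S$ has a local ring $\mathcal O_{X,x}$ which is a DVR with fraction field $K(X)$. Write $n_x:=\ord_x\mM$ for $x\in S$.

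The first step is to produce, for each $x\in S$, a rational function $t_x\in K(X)$ with
\[
\ord_x t_x=1 \quad\text{and}\quad \ord_y t_x=0 \text{ for all } y\in S\setminus\{x\}.
\]
To construct it, I choose an affine open $V\subset X$ containing the finite set $S$. This is possible because a quasi-projective curve (in particular any smooth curve over $k$; alternatively, remove a closed point outside $S$ from each projective completion) has the property that any finite set of points lies in an affine open. Let $A=\mathcal O(V)$; it is a Dedekind domain, and the points of $S$ correspond to distinct maximal ideals $\mathfrak p_x$. By the Chinese remainder theorem I pick $t_x\in A$ with
\[
t_x\in\mathfrak p_x\setminus\mathfrak p_x^2 \quad\text{and}\quad t_x\equiv 1 \pmod{\mathfrak p_y} \text{ for } y\neq x,
\]
using $A\to A/\mathfrak p_x^2\times\prod_{y\neq x}A/\mathfrak p_y$. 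This $t_x$ has the required orders.

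The second step is to set
\[
f=\prod_{x\in S}t_x^{\,n_x}.
\]
By additivity of valuations, $\ord_y f=\sum_x n_x\ord_y t_x=n_y$ for every $y\in S$, which is the claim. Negative $n_x$ cause no trouble, since $t_x\neq 0$ and we are working in the field $K(X)$. If $X$ is not irreducible, I work on the component containing the relevant points, or simply note that the intended setting is an integral curve.

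The only mildly nontrivial input is the existence of an affine open containing $S$. If one prefers to avoid it, I will work locally instead: for each $x$ take a uniformizer $\pi_x$ in $\mathcal O_{X,x}$, and then correct its orders at the other points of $S$ using the standard approximation theorem for finitely many inequivalent valuations of $K(X)$. That theorem can be cited directly, and it yields the statement immediately.
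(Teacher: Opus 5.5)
Your proof is correct, and its outer structure matches the paper's: both write $f=\prod_{x\in S}t_x^{\ord_x\mM}$ with $\ord_x t_x=1$ and $\ord_y t_x=0$ for $y\in S\setminus\{x\}$. The difference is in how $t_x$ is produced.

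The paper argues geometrically. It embeds $X$ in a projective space and takes a hyperplane $H_1$ through $x$ that is not tangent to $X$ there and misses the rest of $S$, together with a hyperplane $H_2$ missing $S$. It then restricts the quotient of the two linear forms to $X$; transversality of $H_1$ is what gives $\ord_x=1$. You argue algebraically, via the Chinese remainder theorem in the Dedekind ring $\mathcal O(V)$ of an affine open $V\supseteq S$, or alternatively via weak approximation.

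The global input is of the same kind: a projective embedding in one case, an affine open containing $S$ in the other, both coming from quasi-projectivity of curves. Your route is more portable, though. It does not need $k$ to be infinite, which the choice of hyperplanes does. It works verbatim for closed points of arbitrary residue degree. And the weak-approximation variant gives the lemma in one step, without the product decomposition. The paper's construction, in turn, is explicit and uses no commutative algebra. In its application (Proposition~\ref{AF12}), $k$ has already been made algebraically closed and $X$ is proper, so nothing is lost there.

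One minor point: your parenthetical about removing a closed point from the projective completion only yields an open subset of $X$ when $X$ is proper. If $X$ is irreducible and not proper, it is already affine. So the quasi-projectivity argument is the one to lean on.
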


\begin{proof}
It is enough to fix $x$ in $S$ and to construct a rational function $g$ such that $\ord_x g = 1$ and $\ord_y g = 0$ at all other points $y$ of $S$ --- the requited function $f$ is then a product of such functions. The abundance of rational functions on $X$ can be expressed, in a way useful for the construction of $g$, by embedding $X$ in a projective space $P$. There exist hyperplanes $H_1$ and $H_2$ such that $H_1$ contains $x$ but $H_1$ is not tangent to $X$ at $x$ and $H_1$ does not contain any other points of $S$, while $H_2$ does not contain any points of $S$. We may assume $X$ does not lie in either $H_1$ or $H_2$, so if $G$ is a rational function on $P$ with divisor $[H_1] - [H_2]$, then $g := G|_X$ does the job.
\end{proof}

\begin{sProposition}
\label{AF12}
Let $k$ be a field, $C$ a smooth proper curve over $k$, $x_0$ a $k$-rational point, $S$ a finite set of closed points disjoint from $x_0$ with complement $U$, $\mM$, $\mM'$ effective divisors with common support \emph{equal} to $S$, and let $j_\mM$, $j_{\mM'}$ denote the embeddings $U \rightrightarrows J_\mM, J_{\mM'}$ of $U$ in the associated generalized Jacobian varieties. Suppose there exists a morphism $\phi$ of algebraic groups such that the triangle
\[
\xymatrix{
& J_{\mM'} \ar[dd]^-{\phi}
\\
U \ar[ur]^-{j_{\mM'}} \ar[dr]_-{j_{\mM}}
\\
& J_{\mM}
}
\]
commutes. Then $\mM' \ge \mM$.
\end{sProposition}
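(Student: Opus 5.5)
The plan is to use Serre's characterization of the generalized Jacobian through local symbols and rational maps. By the universal property (Serre, \emph{Groupes algébriques et corps de classes}, Ch.~III), a rational map $f: C \dashrightarrow G$ to a commutative algebraic group is regular away from $S$ and "has modulus $\mM$'' if and only if it factors through $j_\mM: U \to J_\mM$. Moreover, $J_\mM$ is the quotient of the group of divisors prime to $S$ by the principal divisors $\operatorname{div}(g)$ with $g \equiv 1 \bmod \mM$. First reduce to $k$ algebraically closed: base change to $\kbar$ preserves the formation of generalized Jacobians, the commutativity of the triangle, and the coefficients of $\mM$, $\mM'$, with $S$ replaced by its finitely many geometric points. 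Then $\phi$ is a homomorphism $J_{\mM'} \to J_\mM$ with $\phi \circ j_{\mM'} = j_\mM$. Extending additively to divisors of degree $0$ prime to $S$, the composite divisor class map for $\mM$ factors through the one for $\mM'$. Concretely, if $D$ has degree $0$ and is trivial in $J_{\mM'}$, then it is trivial in $J_\mM$. Equivalently, every $g$ with $g \equiv 1 \bmod \mM'$ has $\operatorname{div}(g)$ equal to $\operatorname{div}(h)$ for some $h \equiv 1 \bmod \mM$.

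Suppose for contradiction that $\mM' \not\ge \mM$, so there is $x \in S$ with $n := \ord_x \mM > n' := \ord_x \mM' \ge 1$. Two functions with the same divisor on a proper curve differ by a nonzero constant $c$. So the condition says: for every $g \equiv 1 \bmod \mM'$ there is $c \in k^*$ with $cg \equiv 1 \bmod \mM$. The goal is to construct $g \equiv 1 \bmod \mM'$ for which no such constant exists. Fix a uniformizer $t$ at $x$; one can take $t$ from Lemma~\ref{AG10}, with order $1$ at $x$ and order $0$ at the other points of $S$. By weak approximation (Riemann--Roch, or the same hyperplane trick as in Lemma~\ref{AG10}), choose $g$ with the following properties:
\begin{itemize}
\item[] $g \equiv 1 \bmod t^{n}$ at every $y \in S\setminus\{x\}$, or more precisely to order $\ord_y\mM$;
\item[] $g = 1 + t^{n'} u$ at $x$, with $u$ a unit at $x$.
\end{itemize}
Then $g \equiv 1 \bmod \mM'$. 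Any $c$ with $cg \equiv 1 \bmod \mM$ must satisfy $c \equiv 1$ at $x$, since $g(x) = 1$, so $c = 1$. But then $\ord_x(g-1) = n' < n$, a contradiction.

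There is a caveat: if $S$ has other points, $c$ is forced to be $1$ by them too, which is consistent. If instead $S = \{x\}$, the value $g(x) = 1$ still forces $c = 1$. So the contradiction holds in all cases and yields $\mM' \ge \mM$.

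I expect the main obstacle to be justifying the translation from "$\phi$ exists'' to the statement about principal divisors. This requires care that $J_\mM$ is exactly $\Div^0_{S}/P_\mM$ on $\kbar$-points, as in Serre Ch.~V, together with the fact that $\phi$, being a homomorphism agreeing with $j$'s on $U$, agrees with the induced maps on all divisor classes. It also requires handling the base point $x_0$ in the Abel--Jacobi normalization, which cancels for degree-$0$ divisors. The approximation step constructing $g$ is routine by comparison.
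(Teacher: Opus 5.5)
Your argument is essentially the paper's: identify $J_\mM(\kbar)$ with $\Div^0(U)/P_\mM$, deduce from $\phi\circ j_{\mMp}=j_\mM$ that $\opnm{div}(g)$ for any $g\equiv 1 \bmod \mMp$ is the divisor of some $h\equiv 1 \bmod \mM$, and force $h=g$ by evaluating at a point of $S$. It is correct up to one slip. At $y\in S\setminus\{x\}$ you must impose $g\equiv 1$ to order $\ord_y\mMp$, not $\ord_y\mM$; otherwise the claim $g\equiv 1\bmod\mMp$ fails whenever $\ord_y\mMp>\ord_y\mM$. Taking $g=1+f$ with $f$ from Lemma~\ref{AG10} for the divisor $\mMp$, which is exactly the paper's choice, fixes this and gives $\ord_y\mMp\ge\ord_y\mM$ at every $y\in S$ at once, without arguing by contradiction.
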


\begin{proof}
We may assume $k$ algebraically closed and $S \neq \emptyset$. The group $J_\mM(k)$ may be identified with the group $\Div^{0}(U)/P_\mM$ of degree $0$ divisors coprime to $S$ modulo divisors of meromorphic functions congruent to $1 \opnm{mod} \mM$. In terms of this identification, the embedding $j_\mM$ is given by
\[
j_\mM(x) = [x] - [x_0],
\]
and similarly for $j_\mMp$. If we restrict attention to $k$-points and extend $j_\mM$, $j_\mMp$ to divisors by linearity, the triangle
\[
\xymatrix{
&& J_\mMp(k) \ar[dd]^-\phi
\\
\Div^0(U) \ar@{}[r]|\subset
&
\Div(U) \ar[ur]^-{j_\mMp} \ar[dr]_-{j_\mM}
\\
&&
J_\mM(k)
}
\]
still commutes. Moreover, if the divisor $D=\sum_n a_n x_n$ has degree $0$ ($\sum_n a_n = 0$), then 
\[
j_\mM(D) = \sum_n a_n([x_n] - [x_0]) = \sum_n a_n [x_n]
\]
and similarly for $j_\mMp$.

By lemma \ref{AG10} there exists a meromorphic function $f \in K(C)^*$ such that $\opnm{div}(f) = \mMp + \mathfrak{r}$ with $\mathfrak{r}$ coprime to $S$; in particular, $\opnm{div}(f+1) \equiv 1 \mod \mMp$ so that $j_\mMp\big(\opnm{div}(f+1)\big) = 0$. Since
\[
j_\mM \big( \opnm{div}(f+1) \big)
=
\phi \big(j_\mMp \big( \opnm{div}(f+1) \big)\big)
=
0,
\]
$\opnm{div}(f+1)$ is the divisor of a meromorphic function $g+1$ such that $\opnm{div}(g+1) \equiv 1 \mod \mMp$. 

On the one hand, this means that $\opnm{div}(g) = \mM'' + \mathfrak{q}$ with $\mM'' \ge \mM$ and $\mathfrak{q}$ coprime to $S$. On the other hand, the equality of divisors
\[
\opnm{div}(f+1) = \opnm{div}(g+1)
\]
implies that $f$ and $g$ differ by a constant, hence have the same divisor.\footnote{In fact, the constant is $1$ since $f(x) = g(x) = 0$ for any $x \in S$.} Together, we find that 
\[
\mM'' + \mathfrak{q} = \opnm{div}(g) = \opnm{div}(f) = \mMp + \mathfrak{r}
\quad \text{with }
\mathfrak{q}, \mathfrak{r} \text{ coprime to }S,
\]
and hence that 
\[
\mM' = \mM'' \ge \mM.
\qedhere
\]

\end{proof}

\begin{sCorollary}
\label{AF13}
In the situation of Proposition \ref{AF12}, suppose $\dim J_\mM = \dim J_\mMp$. Then $\mM = \mMp$ and $\phi$ is the identity map. 
\end{sCorollary}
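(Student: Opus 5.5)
From Proposition \ref{AF12} we already have $\mM' \ge \mM$, and both divisors have support exactly $S$. The plan is to show that the difference $\mM' - \mM$ is detected by the dimension of the generalized Jacobian, so that equal dimensions force $\mM = \mM'$. Once the moduli agree, $\phi$ is pinned down by the commuting triangle and must be the identity.

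The dimension count comes from Serre's formula \cite{SerreGJ}. For an effective modulus $\mM$ with support $S$, we have $\dim J_\mM = g + \deg \mM - 1$ when $S \neq \emptyset$, where $g$ is the genus of $C$ and $\deg$ means the degree over $k$. The case $S = \emptyset$ is vacuous, since then $\mM = \mM' = 0$. The formula comes from the exact sequence
\[
0 \to L_\mM \to J_\mM \to J \to 0,
\qquad
L_\mM = \Big(\prod_{x\in S} (\mathcal O_x/\mathfrak m_x^{n_x})^*\Big)\big/\Gm ,
\]
whose kernel has dimension $\sum_{x\in S} n_x - 1$. Since $\mM' \ge \mM$ and the two have the same dimension, $\deg(\mM' - \mM) = 0$. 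An effective divisor of degree $0$ is zero, so $\mM = \mM'$.

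With $\mM = \mM'$, the map $\phi$ is an endomorphism of $J_\mM$ satisfying $\phi \circ j_\mM = j_\mM$. The image of $U$ generates $J_\mM$ as a group; over $\kbar$ this is Lemma \ref{L: generating}(2). Since $\phi$ is a group homomorphism fixing the generating set $j_\mM(U)$ pointwise on $\kbar$-points, it agrees with the identity on $J_\mM(\kbar)$. Because $J_\mM$ is reduced, $\phi$ is the identity as a morphism.

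Alternatively, the last step follows from the universal property of $j_\mM$: every rational map from $U$ to a commutative group compatible with $\mM$ factors uniquely through $J_\mM$. Both $\phi$ and the identity give such a factorization of $j_\mM$, so they coincide.

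The only step needing care is the dimension formula. One has to confirm that the kernel $L_\mM$ has dimension $\deg \mM - 1$, counting degrees of closed points over $k$ (or simply passing to $\kbar$ at the outset, as in the proof of Proposition \ref{AF12}). Once that is settled, the rest is immediate.
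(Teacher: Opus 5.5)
Your proposal is correct and follows essentially the same route as the paper: after passing to an algebraically closed field, combine $\mM'\ge\mM$ from Proposition~\ref{AF12} with Serre's dimension formula $\dim J_\mM = g+\deg\mM-1$ (so $\mM'>\mM$ would force $\dim J_{\mM'}>\dim J_\mM$) to get $\mM=\mM'$. The paper then concludes $\phi=\mathrm{id}$ from the surjectivity of $j_\mM$ on $\Div^0(U)$, which is exactly your observation that $j_\mM(U)$ generates $J_\mM$.
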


\begin{proof}
We may again assume $k$ is algebraically closed. If $\mMp > \mM$ then by \cite[\S V.1.6 Proposition 2]{SerreGJ} $\dim J_\mMp > \dim J_\mM$, so the first statement follows from Proposition \ref{AF12}. The second statement then follows from the surjectivity of $j_\mM |_{\Div^0(U)}$.
\end{proof}

\begin{sLemma}
\label{AG13}
Let $k$ be a perfect field of positive characteristic $p$ and $L \overset{\iota}\supset K$ be a purely inseparable finite extension of fields of transcendence degree $1$ over $k$. Assume $t \in K$ is a transcendental element over $k$ and $[K:k(t)]$ is finite. Then $\iota$ is given by a relative Frobenius morphism over $k$. That is, there exists an $m \in \Nn$ such that if $L^{(m)} = k \otimes_k L$ denotes the twist of $L$ by the $m$th power of the Frobenius morphism of $k$, then the induced map $L \from L^{(m)}$ has image equal to $K$. 
\end{sLemma}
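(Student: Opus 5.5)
The plan is to show that $K = L^{p^m}k$, the compositum of $k$ with the image of $L$ under $x\mapsto x^{p^m}$, for a suitable $m$. Since $k$ is perfect, the image of the map $L^{(m)} \to L$ is exactly this compositum. Indeed, the map sends $a\otimes x \mapsto a^{1/p^m}$-twisted scalars times $x^{p^m}$; up to the identification of the twist, its image is $k\cdot L^{p^m}$, and $k^{p^m}=k$.

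First I would choose $m$. Because $L/K$ is finite and purely inseparable, there is an $m$ with $L^{p^m}\subset K$, and hence $kL^{p^m}\subset K$ since $k\subset K$. Now compare degrees over $k(t^{p^m})$, or more cleanly over $k(t)$. The key input is the standard fact that for a finitely generated field $E$ of transcendence degree $1$ over a perfect field $k$ we have $[E:kE^p]=p$. To prove it, choose a separating transcendence basis $s$ (possible since $k$ is perfect). Then $E/k(s)$ is finite separable, so $E = kE^p(s)$, and $[E:kE^p]=[k(s):k(s^p)]=p$.

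Iterating this fact gives $[L:kL^{p^m}]=p^m$. On the other hand $[L:K]$ is a power of $p$, say $p^e$, and $L^{p^e}\subset K$. I would take $m=e$. Then $kL^{p^e}\subset K\subset L$, and both $kL^{p^e}$ and $K$ have degree $p^e$ under $L$. It follows that $K=kL^{p^e}$, which is the image of $L^{(e)}$.

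The point needing care is the degree claim $[L:K]=p^e$ with $L^{p^e}\subset K$, which should be taken as the definition of $e$. Here a priori the exponent of $L/K$ may be smaller than $\log_p[L:K]$. To handle this, I would argue inductively on $[L:K]$ instead. Suppose $K\neq L$. Let $K_1=kL^p$. Since $[L:K_1]=p$ and $L/K$ is purely inseparable, I claim $K\subset K_1$. Any intermediate field $K\subset M\subset L$ with $M\neq L$ is contained in a maximal proper subfield of $L$. The subfield $K_1$ is the unique subfield of $L$ of index $p$ containing $k$, since every such subfield contains $L^p$ and has index $p$. Once $K\subset K_1$, I apply induction to $K_1\supset K$, noting that $K_1\cong L^{(1)}$ is again a function field of the same type. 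Composing the Frobenius twists gives the result.

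The main obstacle is the uniqueness step: showing every proper subfield containing $k$ with $L/M$ purely inseparable lies in $kL^p$. The argument is that $M(L^p)$ is either $L$ or contained in $kL^p$-type fields, using $[L:kL^p]=p$ and $L^p\subset M$ for maximal $M$. The hypothesis that $t$ is transcendental with $[K:k(t)]$ finite is used only to guarantee that these are function fields of one variable, so that the degree fact applies.
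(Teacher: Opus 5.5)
Your direct argument is correct and is essentially the paper's degree count run in the other direction. The paper shows $[K^{1/p^n}:K]=p^n$ by comparison with $k(t)$ and sandwiches $K\subseteq L\subseteq K^{1/p^m}$ for the minimal $m$ with $L^{p^m}\subseteq K$, whereas you show $[L:L^{p^e}]=p^e$ (note $kL^{p^e}=L^{p^e}$ since $k$ is perfect) and sandwich $L^{p^e}\subseteq K\subseteq L$. The worry you raise is unfounded: any $x\in L$ has $[K(x):K]=p^j$ dividing $[L:K]=p^e$ with $x^{p^j}\in K$, so $L^{p^e}\subseteq K$ holds automatically (a smaller exponent only helps), and the inductive detour is unnecessary. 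The one step that needs an extra line is $s\notin E^p$ in your key fact; it holds because a $p$-th root of $s$ in $E$ would be both separable and purely inseparable over $k(s)$, hence would lie in $k(s)$, which is absurd.
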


\begin{proof}
By assumption there is some $n$ such that $[L:K]=p^n$. We first show, that since $\mathrm{tr.deg}(K/k)=1$ and $k$ is perfect, we get that $[K^{(1/p^n)}: K]=p^n$. To see that, it will suffice to show that $[K^{(1/p^n)}: K]=[k(t)^{(1/p^n)}:k(t)]$ since the latter is obviously equal to $p^n$. But $[K: k(t)]=[K^{(1/p^n)}: k(t)^{(1/p^n)}]$, and 
\[
[K^{(1/p^n)}: k(t)]=[K^{(1/p^n)}: k(t)^{(1/p^n)}][k(t)^{(1/p^n)}:k(t)]
\]
while also 
\[
[K^{(1/p^n)}: k(t)]=[K^{(1/p^n)}:K][K:k(t)].
\] Putting everything together, we get, indeed, that $[K^{(1/p^n)}: K]=p^n$.

It is standard to check (see, e.g, the proof of \cite[Lemma 4.15]{HaSu}) that there is some $m$ such that $\Fr^m(L)\subseteq K$. Let us verify that if $m$ is minimal such, then $\Fr^m(L)=K$ (and $m=n$). The choice of $m$ assures, on the one hand, that $K^{(1/p^m)}\supseteq L$, so by the first paragraph, $m\ge n$. But the minimality of $m$ assures that $[L:K]\ge p^m$, so $m\le n$. Thus, $m=n$, implying that $[K^{(1/p^n)}:L]=1$, with the desired conclusion. 
\end{proof}

\begin{sLemma}
\label{AG14}
Let $f: X \to Y$ be a map of smooth proper curves over a perfect field $k$. Then $f$ factors as 
\[
X \xto{\Fr^m_\m{rel}} (\Fr^m)^* X \xto{f'} Y
\]
with $f'$ separable, for some $m \in \Nn$. 
\end{sLemma}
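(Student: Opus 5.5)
The plan is to pass to function fields and apply Lemma \ref{AG13}. If $f$ is constant there is nothing to do: the structure morphism of $Y$ factors through a closed point, and any constant map is trivially separable in the degenerate sense (or we exclude this case as the paper intends). So assume $f$ is nonconstant, hence dominant and finite. Assume also, reducing to connected components, that $X$ and $Y$ are integral. Then $f$ induces a finite extension of function fields $K(Y) \hookrightarrow K(X)$.

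Let $K$ be the separable closure of $K(Y)$ inside $L := K(X)$. Then $L/K$ is purely inseparable and finite, and both fields have transcendence degree $1$ over the perfect field $k$. Choose $t\in K(Y)$ transcendental over $k$. Then $[K:k(t)]$ is finite, since $K(X)$ is finitely generated of transcendence degree one.

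Lemma \ref{AG13} now gives an $m$ such that the natural map $L^{(m)}\to L$ has image exactly $K$. Here $L^{(m)}$ is the function field of $(\Fr^m)^*X$, and the map $L^{(m)}\to L$ is the one induced on function fields by $\Fr^m_{\m{rel}}: X\to(\Fr^m)^*X$. Since this map is injective, it identifies $K(({\Fr^m})^*X)$ with $K$. Because $K(Y)\subset K$ and $K/K(Y)$ is separable, the inclusion $K(Y)\hookrightarrow K(({\Fr^m})^*X)$ is separable. Moreover, composing it with $L^{(m)}\to L$ recovers the original inclusion $K(Y)\hookrightarrow K(X)$.

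It remains to geometrize this. The curve $(\Fr^m)^*X$ is again a smooth proper curve over $k$, since $\Fr^m$ is an automorphism of $\Spec k$ when $k$ is perfect. A morphism of function fields $K(Y)\to K((\Fr^m)^*X)$ extends uniquely to a morphism $f':(\Fr^m)^*X\to Y$, because a rational map from a smooth curve to a proper curve is a morphism. We then have $f'\circ\Fr^m_{\m{rel}}=f$, since the two morphisms agree on generic points and $Y$ is separated. Finally, $f'$ is separable because its function field extension is separable.

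The main point needing care is the identification of the twist $L^{(m)}=k\otimes_{k,\Fr^m}L$ with the function field of $(\Fr^m)^*X$, and of the induced map with the relative Frobenius. This follows because base change along an isomorphism $\Spec k\to\Spec k$ commutes with passing to the generic point, and the relative Frobenius is given on rings by $a\otimes x\mapsto a x^{p^m}$. Apart from that, the argument is bookkeeping with Lemma \ref{AG13}.
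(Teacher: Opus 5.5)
Your proposal is correct and is essentially the paper's own argument: factor $K(X)/K(Y)$ as a separable extension followed by a purely inseparable one, apply Lemma \ref{AG13} to the purely inseparable part, and transport back to curves using the equivalence between smooth proper curves and their function fields, which is compatible with Frobenius twists. The only difference is that you spell out the geometric bookkeeping the paper leaves implicit; one small quibble is that ``reducing to connected components'' does not work with a single $m$ (different components may need different Frobenius powers), but the lemma is meant for integral curves anyway.
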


\begin{proof}
The functor $K(\cdot)$ from the category of smooth proper curves over $k$ to fields of transcendence degree $1$ over $k$ is an equivalence of categories. Moreover, $K(\cdot)$ is compatible with formation of twists by relative Frobenius and relative Frobenius morphisms. Hence, this follows from \ref{AG13} along with the fact that any field extension may be factored as a separable extension followed by a purely inseparable extension.
\end{proof}

\begin{sLemma}
\label{MM44}
Let $C$ be a smooth proper curve over a field $k$, and let $\mM$ be an effective divisor. Assume $\dim J_\mM \ge 2$. For $\rho$ an automorphism of $C$ such that $\rho^*\mM = \mM$ we let $C(k)^\rho$ denote the set of $k$-valued fixed points. Then
\[
\{\sharp C(k)^\rho \,|\, \rho \neq id\}
\]
is bounded.

\end{sLemma}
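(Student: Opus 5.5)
We may assume $k$ is algebraically closed, since $C(k)^\rho \subset C(\kbar)^\rho$ and every automorphism over $k$ extends to one over $\kbar$ preserving $\mM$. Let $g$ be the genus of $C$ and $S=\Supp(\mM)$. By Serre, $\dim J_\mM = g$ when $\mM=0$ and $\dim J_\mM = g+\deg\mM-1$ otherwise. So the hypothesis $\dim J_\mM\ge 2$ says that either $g\ge 2$, or $g=1$ and $\mM\neq 0$, or $g=0$ and $\deg\mM\ge 3$. I would split into these cases and give an explicit bound in each. The bound will depend on $C$ and $\mM$ but not on $\rho$.

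\emph{Case $g\ge 2$.} The group $\Aut(C)$ is finite, so the set of values $\sharp C(k)^\rho$ is a finite set of finite numbers. Any nontrivial automorphism of a curve has only finitely many fixed points, since its graph meets the diagonal properly. Alternatively, the Lefschetz/Hurwitz estimate bounds the fixed-point count by $2g+2$ for tame $\rho$. In either version the case is immediate.

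\emph{Case $g=1$, $\mM\ne 0$.} Pick $s\in S$ and use it as the origin, making $C$ an elliptic curve. Since $\rho$ preserves $\mM$, it permutes the finite set $S$. Hence $\rho^{N}$ fixes $s$ for some $N\le \sharp S$, with $N$ bounded. Write $\rho(x)=\alpha(x)+t$ with $\alpha\in\Aut(C,s)$, a finite group. If $\alpha\ne 1$, then the fixed points of $\rho$ are the solutions of $(1-\alpha)(x)=t$. These are finite in number, at most $\deg(1-\alpha)$, and this takes only finitely many values. If $\alpha=1$, then $\rho$ is a translation by $t\ne 0$ and has no fixed points. So $\sharp C(k)^\rho\le\max_{\alpha\ne1}\deg(1-\alpha)$, which is bounded since $\Aut(C,s)$ is finite. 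For this case we do not even need the $\rho^N$ step.

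\emph{Case $g=0$, $\deg\mM\ge3$.} Here $C=\PP^1$ and any nontrivial $\rho\in PGL_2$ has at most $2$ fixed points, so the bound is $2$. This holds regardless of $\mM$. The only place we would have needed $\mM$ is to rule out the genus-$0$ or genus-$1$ case with $\mM=0$, where the fixed-point count is still bounded anyway. For example, a nonidentity translation on an elliptic curve has $0$ fixed points.

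The main point needing care is the uniform finiteness of the fixed-point count for all nonidentity automorphisms when $g\ge2$. This rests on the finiteness of $\Aut(C)$, which is classical (Hurwitz, and Schmid in characteristic $p$). I would simply cite it. The condition $\rho^*\mM=\mM$ and the hypothesis $\dim J_\mM\ge 2$ only serve to guarantee that we land in one of the three cases above.
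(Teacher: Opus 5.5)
Your proof is correct, and its overall shape matches the paper's: reduce to $\kbar$, then split by genus, using finiteness of $\Aut(C)$ for $g\ge 2$ and the fact that a nonidentity element of $PGL_2$ has at most two fixed points for $g=0$.

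The real difference is the genus-one case. The paper uses $\dim J_\mM\ge 2$ to get $\mM\neq 0$ and fixes a point $c\in\Supp\mM$. It then notes that the kernel of the permutation action $\Aut(C,\mM)\to\Aut(\Supp\mM)$ embeds in the finite group $\Aut(C,c)$. Hence $\Aut(C,\mM)$ is finite, and the argument finishes exactly as in genus $\ge 2$.

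You instead write every automorphism as $x\mapsto\alpha(x)+t$. A nontrivial translation has no fixed points. For $\alpha\neq 1$, the fixed points form a fibre of the nonzero isogeny $1-\alpha$, so there are at most $\max_{\alpha\neq 1}\deg(1-\alpha)$ of them. This bounds the fixed points of \emph{all} nonidentity automorphisms of a genus-one curve, with an explicit bound. Together with your genus-zero remark, it shows that neither $\rho^*\mM=\mM$ nor $\dim J_\mM\ge 2$ is needed for the lemma.

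What each route buys: the paper's argument is shorter and keeps every case parallel (finiteness of the relevant automorphism group), but it uses the hypotheses. Yours uses a bit more structure theory of elliptic curves and proves a stronger statement.

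Two cosmetic points. First, the $\rho^N$ step is never used and can be deleted. Second, for $g=1$ the dimension condition actually reads $\deg\mM\ge 2$, since then $\dim J_\mM=\deg\mM$, not merely $\mM\neq 0$. This is harmless, since you prove more than is needed.
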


\begin{proof}
We may assume $k$ is algebraically closed. Any non-identity automorphism has only finitely many fixed points. If the genus $g$ of $C$ is  at least $2$, then $\Aut(C)$ is finite (see, e.g., \cite[Exercise V.1.11]{Hartshorne}), and the result follows, so we may assume $g \le 1$. There is an evident map
\[
\pi:\Aut(C, \mM) \to \Aut(S)
\]
to the symmetric group on the support $S$ of $\mM$. If $g=1$, then our assumption on the dimension of $J_\mM$ implies that $\mM \neq 0$, and so, fixing a point $c$ in the support of $\mM$, we find that $\ker \pi$ embeds in $\Aut(C,c)$ which is again known to be finite (e.g., \cite[Theorem III.10.1]{Silverman}), and the result follows. 

Finally, in the case $g=0$, $\Aut(C, \mM)$ embeds in $\Aut (\PP^1) \simeq PGL_2$, and its elements have at most two fixed points.
\end{proof}

We learned about the following lemma from Jeremy Booher. 

\begin{sLemma}
\label{MM45}
Let $(C, \mM)$, $(C', \mM')$ be smooth proper curves over $F_q$ equipped with non-zero effective divisors with complements $U$, $U'$, and let
\[
\phi:
(C, \mM)_{\bar F} 
\to 
(C', \mM')_{\bar F}
\]
be an isomorphism, i.e. an isomorphism $C_\Fbar \to C'_\Fbar$ such that $\phi^*\mM'_\Fbar = \mM_\Fbar$. Suppose that for every $m$, $\phi$ induces a bijection $U(F_{q^m}) \to U'(F_{q^m})$. Then $\phi$ is defined over $F_q$. 
\end{sLemma}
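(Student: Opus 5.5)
The plan is to compare $\phi$ with its Frobenius conjugate. Let $F$ denote the $q$-power relative Frobenius on $C_\Fbar$ and $C'_\Fbar$, which acts on $\overline{\FF}$-points as the arithmetic Frobenius. Set
\[
\phi^\sigma := F\circ\phi\circ F^{-1}.
\]
As a morphism, this is the base change of $\phi$ along $\Fr$. Then $\phi$ is defined over $F_q$ if and only if $\phi^\sigma=\phi$, by Galois descent for morphisms (the field $\FF_q$ being perfect, with $\Gal(\overline\FF/\FF_q)$ topologically generated by Frobenius). Since $\mM,\mM'$ are defined over $F_q$, the map $\phi^\sigma$ is again an isomorphism of pairs $(C,\mM)_\Fbar\to(C',\mM')_\Fbar$. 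Consequently
\[
\rho := \phi^{-1}\circ\phi^\sigma
\]
is an automorphism of $C_\Fbar$ with $\rho^*\mM=\mM$, and the goal is to show $\rho=\mathrm{id}$.

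To produce many fixed points of $\rho$, I use the hypothesis for every $m$. For each $m$, $\phi$ maps $U(\FF_{q^m})$, the points fixed by $F^m$, onto $U'(\FF_{q^m})$. Take $x\in U(\FF_{q^m})$ with $m$ large. Since $\phi(x)$ is $F^m$-fixed, if also $F(x)=x$ one gets
\[
\rho(x)=\phi^{-1}F\phi F^{-1}(x)=\phi^{-1}F\phi(x).
\]
This only equals $x$ when $\phi(x)$ is $F$-fixed, i.e. when $x\in U(\FF_q)$ maps into $U'(\FF_q)$. That holds by the hypothesis with $m=1$, but it gives only a bounded number of fixed points, which is not enough. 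The better route is to consider
\[
\rho_m := \phi^{-1}\circ F^m\circ\phi\circ F^{-m},
\]
which is an automorphism of $(C,\mM)_\Fbar$ for each $m$. Because $\phi$ maps $F^m$-fixed points of $U$ onto $F^m$-fixed points of $U'$, we have $F^m\phi F^{-m}=\phi$ on $U(\FF_{q^m})$. Hence every point of $U(\FF_{q^m})$ is fixed by $\rho_m$. By Lang--Weil, or just Weil's bound on curves, $\sharp U(\FF_{q^m})\to\infty$. Lemma \ref{MM44}, whose hypothesis $\dim J_\mM\ge2$ I will carry along from the ambient setting, bounds the fixed-point count of non-identity automorphisms of $(C,\mM)$. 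So $\rho_m=\mathrm{id}$ for all $m\ge m_0$, that is,
\[
F^m\phi F^{-m}=\phi \quad\text{for all } m\ge m_0 .
\]

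Taking $m=m_0$ and $m=m_0+1$ and conjugating one identity by the other yields
\[
F\phi F^{-1}=F^{-m_0}\bigl(F^{m_0+1}\phi F^{-(m_0+1)}\bigr)F^{m_0}=F^{-m_0}\phi F^{m_0}=\phi,
\]
so $\phi^\sigma=\phi$ and $\phi$ descends to $F_q$.

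The main obstacle is making the descent step rigorous scheme-theoretically rather than on points. I would check that the equality $F^m\phi F^{-m}=\phi$, established on the dense set of $\overline{\FF}$-points, implies equality of morphisms; this holds because the curves are reduced and the points are dense. I would also identify this conjugation with the semilinear Galois action $\be^X$ from \ref{AD15}, so that invariance under $\Fr$ gives definability over $\FF_q$.
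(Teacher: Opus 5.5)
Your proposal is correct and follows the paper's proof. The paper likewise forms $\rho_m=\Fr_{q^m}^{-1}\phi^{-1}\Fr_{q^m}\phi$, notes that it is an automorphism of $(C,\mM)_\Fbar$ fixing $U(F_{q^m})$, and combines Lemma \ref{MM44} with the growth of $\sharp U(F_{q^m})$ to get $\rho_m=\mathrm{id}$ for $m\gg 0$; the only difference is cosmetic, since the paper concludes that $\phi$ is defined over $\bigcap_{m\gg 0}\FF_{q^m}=\FF_q$, whereas you combine the relations for $m_0$ and $m_0+1$ to obtain commutation with a single Frobenius directly.
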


\begin{proof}
Let $\Fr_{q^m} = \Fr_q^m$ denote the $q^m$-power Frobenius map of any scheme of characteristic $p$. We have $\Fr_q^* \mM = q\mM$, and similarly for $\mM'$. Thus, $\rho_m := \Fr_{q^m}^{-1} \phi^{-1} \Fr_{q^m}  \phi$ is an automorphism of $(C, \mM)_\Fbar$ which fixes $U(F_{q^m})$. By lemma \ref{MM44}, for $m$ sufficiently large,
\[
\sharp U(F_{q^m}) > \max_{\rho \neq id} \sharp C(\Fbar)^\rho.
\]
For such $m$, $\rho_m = id$, so $\phi$ commutes with $\Fr_{q^m}$ and so $\phi$ is defined over $F_{q^m}$. Varying $m$, we find that, by our assumption, $\phi$ is defined over the intersection of the fields $F_{q^m}$ for all sufficiently large $m$. The latter is equal to $F_q$. 
\end{proof}

\spar{AE12} 
We fix two smooth projective irreducible pointed curves $C$, $C'$ over $F_q$ and $\mM$, $\mM'$ effective divisors whose supports are disjoint from the basepoints. We assume the generalized Jacobians $J_\mM$, $J_{\mM'}$ have dimension at least $2$. Using the base-points, we may regard the complements $U$, $U'$ of the supports of the divisors $\mM$, $\mM'$ as embedded in the respective generalized Jacobians $J_\mM$, $J_{\mM'}$. For $m \in \mathbb{Z}$, we write
\[
(\Fr^m)^* \mM = W_m^* \mM
\]
for the pullback of $\mM$ along the isomorphism
\[
W_m: (\Fr^m)^*C \to C
\]
obtained by pullback from 
$
\Fr^m: F_q \to F_q
$
along the structure morphism
$
C \to F_q.
$
We endow $(\Fr^m)^*C$ with the modulus $(\Fr^m)^* \mM$.

\spar{XY55}
Let $\al$ be an isomorphism
\[
(\Fr^m)^*C \to C'
\]
of $\Fq$-curves with modulus, and let $\be$ be an automorphism of $\Fbar$. We let $\tilde \al$ denote the induced isomorphism of generalized Jacobians
\[
\tilde \al:J_{(\Fr^m)^*\mM} = (\Fr^m)^*J_\mM \to J_{\mM'}.
\]
Taking $\Fqbar$-points and composing with $\be^{J_\mM}$ (\ref{AD15}), we obtain an isomorphism of abstract groups
\[
J_\mM(\Fqbar)
\xto{\be^{J_\mM}}
\big( ( \Fr^m)^* J_\mM \big)
(\Fqbar)
\xto{\tilde \al(\Fqbar)}
J_{\mM'}(\Fqbar).
\]
Moreover, the isomorphism $\tilde \al(\Fqbar) \circ \be^{J_\mM}$ commutes with the respective embeddings of $U(F_r)$, $U'(F_r)$ for every $r$. 
Theorem \ref{BC33} states that any isomorphism of groups of $\Fqbar$-points of generalized Jacobians commuting with the embeddings is of this form.


\begin{sTheorem}
\label{BC33}
In the situation and the notation of paragraph \ref{AE12}, suppose 
\[
\psi: J_\mM(\Fqbar) \to J_{\mM'} (\Fqbar)
\]
is an isomorphism of abstract groups such that for every power $r$ of $q$, 
\[
\psi \big(U(F_r)\big) = U'(F_r).
\]
Then there exists an automorphism $\be: \Fbar \to \Fbar$ and an isomorphism 
\[
\al: \be_q^* C \to C'
\]
of curves over $F_q$ such that 
\[
\tag{*}
\al^* \mM' = \be_q^* \mM
\]
and such that 
\[
\tag{**}
\psi = \tilde \al(\Fqbar) \circ \be^{J_\mM}.
\]

\end{sTheorem}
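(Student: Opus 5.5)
We follow the three-step outline from the summary at the start of this section. First, since $\psi(U(\Fqbar)) = U'(\Fqbar)$, Theorem \ref{T: Zil} (through Lemma \ref{L: full implies determines}) applies. It factors $\psi = f\circ\gamma$, where $\gamma$ is induced by an automorphism $\gamma$ of $\overline\FF$ and $f\colon \gamma_q^*J_\mM\to J_{\mM'}$ is a bijective isogeny over $\Fbar$.

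Because $f$ maps the $\overline\FF$-points of $\gamma_q^*U$ onto $U'(\overline\FF)$, and $U$, $U'$ are locally closed curves in the Jacobians, $f$ restricts to a bijective morphism $\gamma_q^*U_\Fbar\to U'_\Fbar$. By smoothness and properness, this extends to a morphism of the completed curves $g\colon \gamma_q^*C_\Fbar\to C'_\Fbar$. Lemma \ref{AG14} factors $g$ as $g'\circ\Fr^m_{\m{rel}}$ with $g'$ separable. The map $g'$ is still bijective on $\overline\FF$-points of the open parts, because relative Frobenius is bijective on points. Lemma \ref{AB12} then shows that $g'$ is an isomorphism.

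We set $\be := \Fr^{m}\circ\gamma$ on $\overline\FF$, absorbing the Frobenius into the field automorphism, using the identification of the relative Frobenius action on points with $\Fr^X$ from \ref{AD15}. The map $f$ correspondingly becomes $f'$ with $f = f'\circ \Fr^m_{\m{rel}}$ on $J$. Then $f'$ is a bijective isogeny $\be_q^*J_\mM\to J_{\mM'}$ whose restriction to $\be_q^*U$ is the isomorphism $g'$ onto $U'$. Since $j_{\mM'}\circ g'^{-1}$ extends to a homomorphism $J_{\be_q^*\mM}\to J_{\mM'}$ by the universal property of generalized Jacobians, and this extension must invert $f'$ on the generating set $U$, the map $f'$ is an isomorphism of algebraic groups. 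Some care is needed to check that the universal property applies: $g'^{-1}$ has modulus at most $\mM'$ relative to the pulled-back data, which follows by pulling back through the isomorphism $f'$.

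Next we identify the moduli. Transport $\mM'$ along $g'$ to a divisor $g'^*\mM'$ on $\be_q^*C$ with the same support $\be_q^*S$. The isomorphism $f'$ now gives commuting triangles in both directions between $J_{\be_q^*\mM}$ and $J_{g'^*\mM'}$ over $\be_q^*U$, and the two Jacobians have equal dimension. Corollary \ref{AF13}, or Proposition \ref{AF12} applied twice, then yields $g'^*\mM' = \be_q^*\mM$, which is $(*)$. It also shows that $f'$ is the map $\tilde g'$ induced by functoriality. This gives $(**)$ over $\Fbar$, with $\al := g'$ a priori defined only over $\Fbar$.

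The final step, which we expect to be the main obstacle, is descending $\al$ to $F_q$ and choosing $\be$ correctly. For each $r$, the hypothesis $\psi(U(F_r)) = U'(F_r)$, combined with the fact that $\be^{J_\mM}$ maps $U(F_r)$ onto $(\be_q^*U)(F_r)$, shows that $\al$ induces bijections $(\be_q^*U)(F_r)\to U'(F_r)$ for all $r$. If $\mM\neq 0$, Lemma \ref{MM45} applied to $(\be_q^*C,\be_q^*\mM)$ and $(C',\mM')$ shows that $\al$ is defined over $F_q$. When $\mM=0$, Lemma \ref{MM45} does not literally apply, but its proof goes through verbatim using Lemma \ref{MM44} in genus at least $2$. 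The delicate bookkeeping lies in checking that $\be_q^*C$ is indeed an $F_q$-curve for a general $\be$, since $\be_q$ is a power of Frobenius on $\FF_q$, and that the composite identity $(**)$ survives the absorption of $\Fr^m$ into $\be$ on the nose rather than up to the relative Frobenius; we would verify the latter by comparing both sides on $U(\Fqbar)$, which generates $J_\mM(\Fqbar)$.
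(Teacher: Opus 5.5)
Your proposal is correct and follows the paper's proof essentially step for step: Theorem \ref{T: Zil}, restriction to the curves, Lemmas \ref{AG14} and \ref{AB12}, absorbing $\Fr^m$ into $\be$, Corollary \ref{AF13} for the moduli over $\Fbar$, Lemma \ref{MM45} for the descent to $F_q$, and checking (**) on the generating set $U(\Fqbar)$. Your remark that the proof of Lemma \ref{MM45} also covers $\mM=0$ is fair.

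The one flaw is your separate universal-property argument that $f'$ is an isomorphism. It is circular, since it invokes ``the isomorphism $f'$'', but you can simply drop it. Corollary \ref{AF13} needs only the single triangle given by the homomorphism $f'$, together with equality of dimensions, and it then yields $f'=\tilde g'$, which is an isomorphism. The existence of that homomorphism, i.e.\ the factorization $f=f'\circ\Fr^m_{\m{rel}}$ on $J$, deserves a line of justification, for instance via birationality of $U^{(\pi)}\to J_\mM$ from Lemma \ref{L: generating}. The paper leaves this point equally implicit when it invokes Corollary \ref{AF13}.
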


The proof spans paragraphs \ref{VD44}--\ref{VG56}. 

\spar{VD44}
By Theorem \ref{T: Zil}, there exists an automorphism $\ga : \Fqbar\xto{\sim} \Fqbar$ and a bijective isogeny of algebraic groups over $\Fbar$
\[
\tilde f: \ga^*(J_{\mM, \Fqbar}) = (\ga_q^*J_\mM)_{\Fqbar} \to J_{\mM',\Fqbar}
\]
(i.e. one that induces a bijection on $\Fqbar$-points) such that the composition
\[
\tag{$\psi$}
J_\mM(\Fqbar) 
\xto{\ga_{J_\mM}} 
(\ga_q^*J_\mM)(\Fqbar) 
\xto{\tilde f(\Fqbar)} 
J_{\mM'}(\Fqbar)
\]
is equal to $\psi$. Our first goal will be to construct the automorphism $\be$, as well as an isomorphism 
\[
\al_\Fbar: \be^*C_\Fbar \to C'_\Fbar
\]
of curves over $\Fbar$
which will later be shown to be defined over $\Fq$. 

\spar{VD45}
Since the composition $\tilde f \circ \iota$ below
\[
\xymatrix{
\ga^*J_{\mM, \Fqbar}
\ar[r]^-{\tilde f}
&
J_{\mM', \Fqbar}
\\
\ga^*(U_\Fqbar)
\ar@{^(->}[u]^-\iota
\ar@{.>}[r]_-f
&
U'_\Fqbar
\ar@{^(->}[u]
}
\]
carries the $\Fqbar$-points of $\ga^*(U_\Fqbar)$ to the $\Fqbar$-points of $U'_\Fqbar$, we find that $\tilde f$ restricts to a (unique) map $f$ of curves over $\Fbar$, as shown. This map extends uniquely to a map of complete smooth curves $ \ga^*(C_\Fqbar) \to C'_\Fqbar$, which we continue to denote by $f$. By Lemma \ref{AG14}, we may factor $f$ as 
\[
\tag{$f$}
\ga^*(C_\Fqbar) 
\xto{\Fr_{/\Fbar}^l} 
(\Fr^l)^* \ga^*(C_\Fqbar) 
\xto{\al_\Fbar} 
C'_\Fbar
\]
for some $l$, with $\alpha_\Fbar$ separable. By Lemma \ref{AB12} it follows that $\al_\Fbar$ is an isomorphism. We complete the promised construction by setting $\be = \ga \circ \Fr^l$.

\spar{VD46}
To verify \ref{BC33}(*) and (**), we will need the following observation. Fix an intermediate field $\FF_q \subset \FF \subset \overline \FF$ with spectrum $F = \Spec \FF$, and write $\ga_F$ for the automorphism of $F$ induced by $\ga$. The map $\ga_{J_\mM}$ fits in an evident commuting square (lower left)
\[
\xymatrix{
J_\mM ( \Fbar )
\ar@/^4ex/[rr]^-{\psi}_-\sim 
\ar[r]_-{\ga_{J_\mM}}
&
(\ga_q^*J_\mM)(\Fbar)
\ar[r]_-{\tilde f (\Fbar)}
&
J_{\mM'}(\Fbar)
\\
U(F)
\ar@{}[u]|\bigcup
\ar[r]_-{(\ga_F)_U}
&
(\ga_q^*U)(F)
\ar@{}[u]|\bigcup
\ar@{.>}[r]_{f(\Fbar)}^-\sim
&
U'(F).
\ar@{}[u]|\bigcup
}
\]
It follows that $f$ induces a bijection, as shown.

\spar{QA33}
Next, we show that the equality of divisors \ref{BC33}(*) holds at least over $\Fbar$. Since $\al_\Fbar$ is an isomorphism of complete curves, the composition \ref{VD45}(f) induces a bijection of $\Fbar$-points. We have seen that $f$ restricts to a bijection of open curves. It follows that $f$ also restricts to a bijection of the complements $\Supp \ga^* \mM_\Fbar \xto{\sim} \Supp \mM'_\Fbar$, and hence the divisors $\al_\Fbar^* \mM'_\Fbar$ and $\be^* \mM_\Fbar$ on $\be^* C_\Fbar$ have equal support. Applying Corollary \ref{AF13}, we find that 
\[
\al_\Fbar^* \mM'_\Fbar=\be^* \mM_\Fbar.
\]

\spar{VE56}
We now argue that $\al_\Fbar$ is defined over $\Fq$. Returning to the composition \ref{VD45}(f), and to the intermediate field $F = \Spec \FF$, we have, on the one hand, that the composition induces a bijection of $F$-points of open curves. On the other hand, the relative Frobenius morphism of $\ga^*U_\Fbar$ over $\Fbar$ is the base-change of the relative Frobenius morphism of $\ga_q^*U$ over $F_q$, and the latter induces a bijection on $F$-points. So the first morphism too induces a bijection on $F$-points of open curves, and so finally, $\al_\Fbar$ induces a bijection on $F$-points of open curves. By Lemma \ref{MM45}, it follows that $\al_\Fbar$ is defined by a map of curves $\al: \be_q^*C \to C'$ over $F_q$. 

\spar{VF55}
Having previously established the equality of divisors \ref{BC33}(*) over $\Fbar$, it follows that the same holds already over $F_q$. 

\spar{VG56}
Finally, to verify \ref{BC33}(**), we may go back to working over $\Fbar$. We've seen above that $\al$ is a map of curves with modulus, so we may consider the induced map
\[
\tilde \al: \be_q^*J_\mM \to J_{\mM'}
\]
of generalized Jacobians. We claim that the triangle 
\[
\xymatrix{
\ga^* J_{\mM, \Fbar}
\ar[d]_-{\Fr^l_{/\Fbar}}
\ar[dr]^-{\tilde f}
\\
\be^*J_{\mM, \Fbar}
\ar[r]_-{\tilde \al_\Fbar}
&
J_{\mM', \Fbar}
}
\]
commutes. Indeed, it's enough to check the commutativity on the $\Fbar$-points of $\ga^*U_\Fbar$, which holds by construction. This completes the proof of Theorem \ref{BC33}.

\spar{fff}
A less precise statement holds over an arbitrary field $k$. This statement factors through an algebraic closure of $k$, so we may as well assume $k$ itself to be algebraically closed. 

\begin{sProposition}\label{P: ACF version of main thm}
Let $(C,\mM, U)$, $(C', \mM', U')$ be pointed curves with moduli over an algebraically closed field $k$, and
\[
\psi: J_\mM(k) \xto{\sim} J_{\mM'}(k)
\]
an isomorphism of abstract groups inducing a bijection
\[
\psi: U(k) \xto{\sim} U'(k).
\] 
Then there exists an automorphism $\be : \Spec k \xto{\sim} \Spec k$ and an isomorphism of pointed curves
\[
\al: \be^* C \to C'
\]
such that $\psi$ is induced by $\be$ followed by $\al$, and
\[
\be^*\mM = \al^* \mM'. 
\]
\end{sProposition}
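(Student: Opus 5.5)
The argument follows paragraphs \ref{VD44}--\ref{QA33} of the proof of Theorem \ref{BC33}, with $\Fbar$ replaced by $k$ and the descent step \ref{VE56} omitted. The standing hypothesis $\dim J_\mM, \dim J_{\mM'} \ge 2$ from Theorem \ref{T: main} is needed to invoke Theorem \ref{T: Zil}, and I assume it throughout.

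\textbf{Step 1: factor $\psi$ via Theorem \ref{T: Zil}.} Since $\psi$ carries $U(k)$ onto $U'(k)$, Theorem \ref{T: Zil} together with Lemma \ref{L: full implies determines}(2) gives $\psi = \tilde f \circ \ga$. Here $\ga$ is an automorphism of $k$, acting on points by $\ga_{J_\mM}$, and $\tilde f : \ga^* J_\mM \to J_{\mM'}$ is a bijective isogeny over $k$.

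\textbf{Step 2: restrict to the curves.} Because $\tilde f$ maps the $k$-points of $\ga^*U$ onto $U'(k)$, it restricts to a bijective morphism $f : \ga^*U \to U'$. This extends uniquely to a morphism of smooth complete curves $f : \ga^*C \to C'$.

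\textbf{Step 3: make the curve map separable and an isomorphism.} If $\operatorname{char} k = p > 0$, apply Lemma \ref{AG14} (note $k$ is perfect) to write $f = \al \circ \Fr^l_{/k}$ with $\al : (\Fr^l)^*\ga^*C \to C'$ separable. The relative Frobenius is bijective on $k$-points, so $\al$ is bijective on $k$-points of the open curves. Lemma \ref{AB12} then shows $\al$ is an isomorphism. Set $\be = \ga \circ \Fr^l$, or $\be = \ga$ in characteristic $0$, where $f$ is already separable.

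\textbf{Step 4: pointedness.} The base points map to the identity of the Jacobians and $\tilde f$ is a group map, so $\al$ sends the base point of $\be^*C$ to that of $C'$. Hence $\al$ is an isomorphism of pointed curves.

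\textbf{Step 5: match the moduli.} Since $\al$ is a bijection on all $k$-points and restricts to a bijection $\be^*U \to U'$, it carries the complements onto each other. Thus $\al^*\mM'$ and $\be^*\mM$ have the same support. The morphism $\al$ induces $\tilde \al : J_{\be^*\mM''} \to J_{\mM'}$, where $\mM'' := (\be^{-1})^*\al^*\mM'$ in the obvious sense. So we may compare $\al^*\mM'$ and $\be^*\mM$ as two moduli on the single pointed curve $\be^*C$ with common support. To do this:
\begin{itemize}
\item Show that $\tilde f$ factors as $\tilde \al \circ \Fr^l_{/k}$ on $\ga^*U$. Since $U$ generates $J_\mM$ by Lemma \ref{L: generating}, this factorization holds on all of $J$.
\item Conclude that the bijective isogeny $\tilde\al^{-1}\circ\tilde f\circ(\Fr^l)^{-1}$ gives a morphism of algebraic groups $J_{\al^*\mM'} \to J_{\be^*\mM}$ compatible with the two embeddings of $\be^*U$.
\item Both Jacobians have dimension $\dim J_\mM = \dim J_{\mM'}$, since $\tilde f$ is a bijective isogeny. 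Corollary \ref{AF13} then gives $\al^*\mM' = \be^*\mM$ and identifies this map with the identity.
\end{itemize}

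\textbf{Step 6: conclude.} By Step 5, $\psi = \tilde\al \circ \be_{J_\mM}$, using $\tilde f = \tilde \al \circ \Fr^l_{/k}$ and $\be = \ga\circ\Fr^l$.

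\textbf{Main obstacle.} The delicate point is Step 5. A map between generalized Jacobians exists a priori only in one direction, so one must check that the map used really is a morphism of algebraic groups compatible with the embeddings, as Proposition \ref{AF12} requires. This is where it matters that $\tilde\al$ is an isomorphism of algebraic groups, not merely a bijective isogeny. Concretely, $\tilde\al$ is induced by the isomorphism $\al$ of curves with the transported modulus, so its inverse is induced by $\al^{-1}$. With that in hand, Corollary \ref{AF13} applies in the direction needed.
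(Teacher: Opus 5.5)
Your overall route is the paper's: Theorem~\ref{T: Zil} gives $\psi=\tilde f\circ\ga$, $\tilde f$ restricts to the curves, Lemmas~\ref{AG14} and \ref{AB12} split off $\Fr^l$ and leave an isomorphism $\al$, and Corollary~\ref{AF13} matches the moduli. The other steps are fine. The gap is in Step~5, which you correctly flag as the crux: it does not produce the morphism of algebraic groups that Proposition~\ref{AF12} requires.

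\emph{First bullet.} $\Fr^l_{/k}$ maps $\ga^*J_\mM$ to $(\Fr^l)^*\ga^*J_\mM=J_{\be^*\mM}$, whereas $\tilde\al$ is defined on $J_{\al^*\mM'}$. So $\tilde\al\circ\Fr^l_{/k}$ only makes sense once you know $\be^*\mM=\al^*\mM'$, which is the conclusion. Generation by $U$ only propagates an identity between two morphisms already defined on the whole group. That is how the paper uses it in paragraph~\ref{VG56}, \emph{after} paragraph~\ref{QA33}.

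\emph{Second bullet.} $(\Fr^l_{/k})^{-1}$ is not a morphism. So $\tilde\al^{-1}\circ\tilde f\circ(\Fr^l_{/k})^{-1}$ is a priori only a bijection of $k$-points; note also that it goes from $J_{\be^*\mM}$ to $J_{\al^*\mM'}$, not the direction you state. It is a morphism exactly when $\tilde f$ factors through the relative Frobenius of $\ga^*J_\mM$. That is not automatic for bijective isogenies in dimension $\ge 2$: for an elliptic curve $E$, $\Fr_{/k}\times\mathrm{id}$ on $E\times E$ does not factor through the relative Frobenius of $E\times E$. Your ``main obstacle'' paragraph therefore looks in the wrong place. $\tilde\al$ is an isomorphism by transport of structure; the real issue is $\Fr^l$.

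The gap is easy to close, in either of two ways.

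\emph{Use only $k$-points.} The proof of Proposition~\ref{AF12} uses $\phi$ only as a homomorphism of groups of $k$-points with $\phi\circ j_{\mM'}=j_\mM$ on $\Div(U)$. Your bijection $J_{\be^*\mM}(k)\to J_{\al^*\mM'}(k)$ is such a homomorphism. Check this on $u=\Fr^l_{/k}(v)$ with $v\in(\ga^*U)(k)$, using $\tilde f\circ j_{\ga^*\mM}=j_{\mM'}\circ f$. That argument then gives $\be^*\mM\ge\al^*\mM'$, and applying it to the inverse gives the reverse inequality.

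\emph{Produce a genuine morphism.} By Serre's definition, $\be^*\mM$ is a modulus for $j_{\mM'}\circ\al$ iff $j_{\mM'}(\al(\opnm{div} g))=0$ for all $g\equiv 1\bmod\be^*\mM$; this is a condition on $k$-points only. Write $(\Fr^l_{/k})^*g=h^{p^l}$ with $h\in K(\ga^*C)$. Comparing valuations gives $h\equiv 1\bmod \ga^*\mM$ and $\opnm{div} g=(\Fr^l_{/k})_*\opnm{div} h$. Hence
\[
j_{\mM'}(\al(\opnm{div} g))=\tilde f\big(j_{\ga^*\mM}(\opnm{div} h)\big)=0 .
\]
The universal property of $J_{\be^*\mM}$ then gives a morphism $J_{\be^*\mM}\to J_{\mM'}$ compatible with the embeddings. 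Composing with $\tilde\al^{-1}$ puts you exactly in the setting of Corollary~\ref{AF13}.

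This second argument is what the paper leaves implicit when it asserts that ``$\al$ induces a map of generalized Jacobians'' for the modulus $(\Fr^l)^*\ga^*\mM$. Once $\al^*\mM'=\be^*\mM$ is established, your Step~6 goes through as in paragraph~\ref{VG56}.
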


\begin{proof}
By theorem \ref{T: Zil} there exists an automorphism $\ga: \Spec k \to \Spec k$ and a bijective isogeny
\[
f: \ga^*J_\mM \to J_{\mM'}
\]
such that the composition 
\[
\xymatrix{
(\ga^*J_\mM)(k)
\ar[r]^f
&
J_{\mM'}(k)
\\
J_\mM(k)
\ar[u] \ar[ur]_\psi
}
\]
is equal to $\psi$, as shown. The map $f$ induces maps of open / closed curves 
\[
\xymatrix{
\ga^* J_\mM \ar[r]^f & J_{\mM'}
\\
\ga^*U \ar[r]_g \ar@{^(->}[u] \ar@{^(->}[d]
& 
U' \ar@{^(->}[u] \ar@{^(->}[d]
\\
\ga^*C \ar[r]_g  & C'.
}
\]
If the characteristic of $k$ is $0$, then $f$ and $g$ are isomorphisms, and it follows that
\[
g^* \mM' = \ga^*\mM
\]
as in the proposition.

Suppose now that the characteristic of $k$ is positive. By lemmas \ref{AG14} and \ref{AB12}, $g$ may be factored as
\[
\ga^* C \xto {\Fr_\m{rel}^l} 
(\Fr^l)^* \ga^* C \underset{\sim}{\xto{\al}}
C',
\]
where $\al$ is an isomorphism. If we endow the pointed curve $(\Fr^l)^* \ga^* C$ with the modulus $(\Fr^l)^* \ga^* \mM$, then $\al$ induces a map of generalized Jacobians, and we have
\[
\dim J_{(\Fr^l)^* \ga^* \mM} = \dim J_{\mM'}.
\]
It follows from Proposition \ref{AF13} that 
\[
(\Fr^l)^* \ga^* \mM = \al^*\mM,
\]
which completes the proof of the proposition.
\end{proof}

\section{Recovering open curves from L-fuctions}

As mentioned in the introduction, Booher and Voloch \cite{BoVo1} show that Theorem \ref{BC33} implies Corollary \ref{BA36} below. Their treatment is somewhat condensed, and we include a detailed account here for the reader's convenience. 

\spar{BA33}
Let $(C,x)$ be a smooth proper pointed curve over the field $\Fq$ of $q = p^n$ elements, let $K = K(C)$ be the function field, and $\mM$ an effective divisor coprime to $x$, let $U = C \setminus \Supp \mM$, and let 
\[
j_\mM: U \to J_\mM
\]
be the canonical embedding of $(U, x)$ in the generalized Jacobian associated to $\mM$. For any $\Fq$-scheme $X$, we let $\Fr_q = \Fr^n: X \to X$ be the $\Fq$-linear Frobenius morphism. Let $\tilde U \to U$ be the finite \'etale cover obtained from the pullback
\[
\xymatrix{
\tilde U \ar[r] \ar[d] & J_\mM \ar[d]^-{\Fr_q-\mathrm{id}}
\\
U \ar[r] & J_\mM
}
\]
and let $\tilde C \to C$ be the induced map of smooth proper curves over $\Fq$. Let $K^\mM_x$ be the function field of $\tilde U$. Then according to Proposition 2.2 of Booher-Voloch \cite{BooVol},
$K^\mM_x$ is an abelian Galois extension of $K$, and there is an isomorphism
\[
\tag{$\ast$}
\Theta: \Gal(K^\mM_x/K) \xto{\sim} J_\mM(\Fq)
\]
with the following property: if $y$ is a closed point of $\tilde U$ of degree $\deg(y)$ over its image in $C$, then
\[
\opnm{Frob}_y \overset{\Theta}\mapsto [y - \deg(y)x].
\]
(See Chapter VI, Section 5, Subsection 22 of Serre \cite{SerreGJ} for the \textit{Frobenius substitution} $\opnm{Frob}_y$.)

\spar{BA34}
Continuing with the situation and the notation of paragraph \ref{BA33}, 
if
\[
\chi:\Gal(K^\mM_x/K) \to \CC^*
\]
is a homomorphism, then the L-fuction of $\chi$ is defined by
\[
L(T, C, \chi) :=
\prod_{y \in |U|} \big( 1- \chi(\opnm{Frob}_y) T^{\deg(y)} 
\big)^{-1}
\]
where the product ranges over the set $|U|$ of closed points of $U$. Translating via the isomorphism \ref{BA33}(*) (and importing the proof of the classical Euler identity from e.g. Chapter VII Proposition 1.1 of Neukirch \cite{Neukirch}), we find that the L-function may be written as a sum over effective divisors on $U$ like so:
\[
\tag{*}
L(T, C, \chi) = \sum_{D \in \opnm{Div}_{\ge 0}(U)}
\chi \big(
[D - \deg(D)x]
\big)
T^{\deg(D)}.
\]

\spar{BA35}
We return to the situation of paragraph \ref{AE12} given by two smooth projective irreducible pointed curves $(C,c)$, $(C', c')$ over $\Fq$ and $\mM$, $\mM'$ effective divisors whose supports are disjoint from the base-points. We let $U = C\setminus \Supp(\mM)$ and similarly for $U'$. We assume the generalized Jacobians have dimension at least $2$. For any finite extension $F_{q^m} = \Spec \FF_{q^m}$ of $F_q$, the construction of paragraph \ref{BA33} supplies a finite abelian Galois extension $K(C_{F_{q^m}})^\mM_x$ of the function field $K(C_{F_{q^m}})$ of the base-change $C_{F_{q^m}}$ of $C$ to $F_{q^m}$, as well as an isomorphism
\[
\tag{$\ast_n$}
\Gal\big(
K(C_{F_{q^m}})^\mM_x / K(C_{F_{q^m}})
\big)
\xto{\sim}
J_\mM(F_{q^m}).
\]
Thus, a homomorphism 
\[
\chi: J_\mM(F_{q^m}) \to \CC^*
\]
may be thought of as a Galois character, and we may reasonably speak of the associated L-function $L(T, C_{F_{q^m}}, \chi)$, and similarly for $C'$.

As above, we denote the Abel-Jacobi maps by $j_\mM$, $j_{\mM'}$. As Booher and Voloch explain, the following statement follows from Theorem \ref{BC33}.

\begin{sCorollary}[Conjecture 2.4 of Booher and Voloch \cite{BoVo1}]
\label{BA36}
In the situation and the notation of paragraph \ref{BA35}, suppose the homomorphism of abstract groups
\[
\psi: J_\mM(\Fqbar) \to J_{\mM'}(\Fqbar)
\]
induces an isomorphism 
\[
J_\mM(F_{q^m}) \simeq J_{\mM'}(F_{q^m})
\]
for every $m \ge 1$. For any character $\chi$ of $J_\mM(F_{q^m})$, we denote by
\[
\chi' := \chi \circ \psi^{-1}|_{J_{\mM'}(F_{q^m})}
\]
the associated character of $J_{\mM'}(F_{q^m})$. If for all $m \ge 1$ and all characters $\chi$ of $J_\mM(F_{q^m})$, 
\[
L(T, C_{F_{q^m}}, \chi) = 
L(T, C'_{F_{q^m}}, \chi' ),
\]
then there exists an automorphism $\be: \overline F_q \to \overline F_q$ (inducing $\Fr^l$ on $F_q$ for some $l \in \mathbb{Z}$), and an isomorphism 
\[
\al: (\Fr^l)^* C \to C'
\]
of curves over $F_q$ such that 
\[
\al^* \mM' =(\Fr^l)^* \mM
\]
and such that (in the notation of paragraph \ref{XY55})
$
\psi = \tilde \al \circ \be^{J_\mM}.
$

\end{sCorollary}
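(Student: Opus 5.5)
The plan is to reduce Corollary \ref{BA36} to Theorem \ref{BC33}. The only hypothesis of that theorem still missing is
\[
\psi\big(U(F_{q^m})\big) = U'(F_{q^m}) \quad\text{for all } m\ge 1.
\]
Here $U(F_{q^m})$ is identified with its image $j_\mM(U(F_{q^m})) \subset J_\mM(F_{q^m})$, namely the classes $[y - x]$ with $y$ of degree $1$ over $\FF_{q^m}$. Once this equality is established, Theorem \ref{BC33} yields $\be$, $\al$ and the equalities (*) and (**). Setting $l$ so that $\be_q = \Fr^l$ gives exactly the conclusion stated.

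To extract the degree-one points from the L-functions, fix $m$. Working over $F_{q^m}$, formula \ref{BA34}(*) writes
\[
L(T, C_{F_{q^m}}, \chi) = \sum_{D\ge 0} \chi\big([D - \deg(D)x]\big)\, T^{\deg D},
\]
where $D$ ranges over effective divisors on $U_{F_{q^m}}$ and degrees are taken over $\FF_{q^m}$. The coefficient of $T$ is $\sum_{y} \chi([y-x])$, summed over the $\FF_{q^m}$-rational points $y$ of $U$. Write $\delta_U$ for the multiset $\{j_\mM(y)\}$ in the finite abelian group $A = J_\mM(F_{q^m})$. Because $j_\mM$ is injective on $U$ (Lemma \ref{L: generating}(3), or simply because $\dim J_\mM\ge 2$), $\delta_U$ is the indicator function of the subset $j_\mM(U(F_{q^m}))$. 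The coefficient of $T$ is then $\hat\delta_U(\chi) = \sum_{a\in A}\delta_U(a)\chi(a)$.

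The hypothesis gives $\hat\delta_U(\chi) = \hat\delta_{U'}(\chi\circ\psi^{-1})$ for every character $\chi$. Transport $\delta_{U'}$ to $A$ via $\psi$, using that $\psi$ restricts to an isomorphism $A \to A' = J_{\mM'}(F_{q^m})$. The two functions $\delta_U$ and $\delta_{U'}\circ\psi$ on $A$ then have equal Fourier transforms. By Fourier inversion on the finite abelian group $A$ they are equal, so $\psi(j_\mM(U(F_{q^m}))) = j_{\mM'}(U'(F_{q^m}))$ as subsets of $J_{\mM'}(F_{q^m})$.

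This holds for every $m$, so the hypothesis of Theorem \ref{BC33} is met with $r = q^m$. It remains to check that $\psi$, being a group isomorphism on all of $J_\mM(\Fqbar)$, matches the setup of Theorem \ref{BC33} as stated; it does, since no other hypotheses are needed there.

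The step I expect to need the most care is the bookkeeping in the base change. Over $F_{q^m}$, the identification \ref{BA35}$(\ast_m)$ and the Euler-product expansion must use degrees relative to $\FF_{q^m}$, and the base point $x = c$ must remain rational, so that the linear coefficient really counts $F_{q^m}$-points of $U$. I would also confirm that Booher--Voloch's $\Theta$ is compatible with $j_\mM$, so that the coefficient is exactly $\chi(j_\mM(y))$ and not some twist of it. Beyond that, the argument is Fourier inversion followed by an application of Theorem \ref{BC33}.
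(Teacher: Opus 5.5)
Your proposal is correct and follows essentially the paper's route. You first derive $\psi(U(F_{q^m}))=U'(F_{q^m})$ from the L-function identity, using character orthogonality and the injectivity of $j_\mM$, and then apply Theorem \ref{BC33}. The only difference is cosmetic: you read off the coefficient of $T$ directly, whereas the paper regroups the whole series by classes $E$ and uses Lemma \ref{BB33} at $r=1$ to identify $U(F_q)$ as the classes with exactly one effective representative.
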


The proof hinges on the following lemma. 

\begin{sLemma}
\label{BB33}
Let $P_\mM \subset K(C)^*$ denote the subgroup of divisors of functions congruent to $1$ modulo $\mM$, and write $\equiv_\mM$ for equivalence of divisors coprime to $\mM$ modulo $P_\mM$. Let $E \in \Div(U)$ be a divisor of degree $0$ coprime to $\mM$. Then $E+c$ is congruent modulo $P_\mM$ to an $F_q$-rational point of $U$ if and only if 
\[
\tag{*}
\sharp 
\Big\{ D \in \Div_{\ge 0}(U) 
\; \big| \; 
D \equiv_\mM E +c 
\Big\} = 1.
\]
\end{sLemma}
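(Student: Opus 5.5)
The plan is to translate condition (*) into a statement about $\Fq$-rational effective divisors of degree $1$ in a fixed class of $J_\mM$. Here the divisors in $\Div_{\ge 0}(U)$ are understood, as in the Euler identity \ref{BA34}(*), to be $\Fq$-rational effective divisors supported on $U$, that is, finite sums of closed points of $U$. First I would note that $\equiv_\mM$ preserves degree, because divisors of functions have degree $0$. So any $D$ in the set (*) satisfies $\deg D = \deg(E+c) = 1$. An effective $\Fq$-rational divisor of degree $1$ is a single closed point of degree $1$, i.e. an $F_q$-rational point $y\in U(F_q)$. Hence the set in (*) equals
\[
\{ y \in U(F_q) \,:\, [y] \equiv_\mM E + c \},
\]
and in terms of the Abel--Jacobi map $j_\mM(y)=[y]-[c]$ this is $\{y\in U(F_q): j_\mM(y) = [E]\}$, where $[E]$ is the class of $E$ in $J_\mM(\Fq)=\Div^0_{F_q}(U)/P_\mM$ (rational classes, using that the identification of \ref{AF12} is Galois-equivariant).

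Next I would prove the two directions. If the set has exactly one element, then in particular $E+c$ is congruent to some $F_q$-rational point $y$ of $U$, which is the forward direction read backwards. Conversely, suppose $E+c\equiv_\mM y$ with $y\in U(F_q)$. Then $y$ lies in the set. Any other element $y'$ satisfies $j_\mM(y')=j_\mM(y)$, and injectivity of $j_\mM$ on $U$ forces $y'=y$. This injectivity holds because $\dim J_\mM\ge 2$ means the arithmetic genus $\pi\ge 2$, and then the Abel--Jacobi map is an embedding (\cite[\S V]{SerreGJ}). Alternatively one can use the uniqueness clause of Lemma \ref{L: generating}(1): if $j(y)=j(y')$ with $y\ne y'$, add $\pi-1$ generic independent points to get two distinct representations of a generic element. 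Hence the count is exactly $1$.

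The main obstacle is the injectivity of $j_\mM$ on $U$, together with the rationality bookkeeping. Concretely, injectivity says that no function $h\equiv 1 \bmod \mM$ has divisor $[y]-[y']$. When $\mM=0$ and $g\ge 2$ this is the classical fact that a curve of positive genus has no degree-one function. When $\mM\neq 0$, such an $h$ would give an isomorphism $C\cong\PP^1$, so $g=0$, and the condition $h\equiv 1 \bmod \mM$ with $\deg\mM\ge 3$ (forced by $\pi=\deg\mM-1\ge 2$) is impossible for a Möbius function. Indeed, $h-1$ would vanish to total order at least $3$ while having degree $1$. The case $g=1$ similarly has no degree-one function. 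I would cite Serre for this rather than redo it, falling back on the Lemma \ref{L: generating} argument if needed.
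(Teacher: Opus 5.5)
Your proposal is correct and follows the paper's proof. Both arguments use degree to reduce (*) to counting $y\in U(F_q)$ with $[y]\equiv_\mM E+c$, and then invoke injectivity of $j_\mM$. The paper simply asserts this injectivity from $\dim J_\mM\ge 2$, and your direct verification of it is sound: a function $h\equiv 1 \bmod \mM$ with divisor $[y]-[y']$ would have degree one, forcing $g=0$ and $\deg\mM\le 1$, hence $\pi\le 0$.

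Drop the suggested fallback via Lemma \ref{L: generating}(1), though. With $y$ fixed, $j_\mM(y)+j_\mM(x_2)+\cdots+j_\mM(x_\pi)$ lies in a $(\pi-1)$-dimensional translate and is not generic in $J_\mM$. The generic uniqueness clause therefore says nothing about it, but your main argument does not need it.
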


\begin{proof}
Evidently, equivalence modulo $P_\mM$ \textit{implies} equivalence modulo $K(C)^*$, i.e. linear equivalence. Since $E+c$ has degree $1$, any effective divisor $D$ linearly equivalent to $E+c$ must be an $F_q$-point $D = d \in U(F_q)$. Moreover, $d \equiv_\mM E+c$ if and only if $d\in j_\mM^{-1}(E)$. Thus, (*) is equivalent to 
\[
\tag{**}
\sharp\big(j_\mM^{-1}(E)\big) = 1. 
\]
Our assumption that $\dim J_\mM \ge 2$ guarantees that $j_\mM$ is injective. 
\end{proof}

\spar{BB34}
The proof of Corollary \ref{BA36} also makes use of the classical fact that if $A$ is a finite abelian group, then the homomorphisms $\chi:A \to \CC^*$ form a basis for the space of functions $\CC^A$. Hence, if $f:A\to \CC$ is a function such that $\sum_{a\in A} \chi(a)f(a) = 0$ for all characters $\chi$, then $f = 0$. 

\begin{proof}[Proof of Corollary \ref{BA36}]
We claim that under the assumptions of the Corollary,
\[
\psi\big(U(F_{q^m}) \big) = U'(F_{q^m})
\]
for all $n \ge 1$. By change of base from $F_q$ to $F_{q^m}$, the claim reduces to the case $m=1$.  By \ref{BA34}(*) we have 
\begin{align*}
\sum_{E \in J_\mM(F_q)}
&\chi(E) \sum_{r=0}^\infty
\sharp 
\Big\{ D \in \Div_{\ge 0}(U) 
\; \big| \; 
D \equiv_\mM E +rc 
\Big\} T^r
\\
&= 
L(T, C, \chi)
\\
&=
L(T, C'_{F_{q}}, \chi' )
\\
&=
\sum_{E' \in J_{\mM'}(F_q)}
\chi'(E') \sum_{r=0}^\infty
\sharp 
\Big\{ D' \in \Div_{\ge 0}(U') 
\; \big| \; 
D' \equiv_{\mM'} E' +rc' 
\Big\} T^r
\\
&=
\sum_{E \in J_{\mM}(F_q)}
\chi(E) \sum_{r=0}^\infty
\sharp 
\Big\{ D' \in \Div_{\ge 0}(U') 
\; \big| \; 
D' \equiv_{\mM'} \psi(E) +rc' 
\Big\} T^r
\end{align*}
for all characters $\chi$ of $J_\mM(F_q)$. Since characters form a basis for the space of functions (\ref{BB34}), it follows that 
\[
\sharp 
\Big\{ D \in \Div_{\ge 0}(U) 
\; \big| \; 
D \equiv_\mM E +rc 
\Big\}
=
\sharp 
\Big\{ D' \in \Div_{\ge 0}(U') 
\; \big| \; 
D' \equiv_{\mM'} \psi(E) +rc' 
\Big\} 
\]
for all $r$ and all $E \in J_\mM(F_q)$. According to Lemma \ref{BB33} applied to the case $r=1$ it follows that $E \in U(F_q)$ if and only $\psi(E) \in U'(F_q)$, which establishes the claim. The Corollary now follows from Theorem \ref{BC33}.
\end{proof}

\bibliographystyle{plain}
\bibliography{harvard}
\end{document}